\newcommand{\bN}{\mathbb{N}}
\newcommand{\bR}{\mathbb{R}}
\newcommand{\bP}{\mathbb{P}}
\newcommand{\bE}{\mathbb{E}}
\newcommand{\infc}{ \epsilon }
\newcommand{\Ztheta}{u}
\newcommand{\YorZ}{Z}
\newcommand{\Holderindex}{{\rm H}}
\newcommand{\Holderindexl}{{\rm L}}
\newcommand{\gNL}{\mathcal{C} (\infc, f, \Holderindexl )}
\newcommand{\Yp}{a}
\newcommand{\Yq}{b}
\newcommand{\Yr}{c}
\newcommand{\SDEa}{ \sigma }
\newcommand{\bD}{\bR}
\newcommand{\partY}{Y_n }
\newcommand{\partYt}{\partY (t) }
\newcommand{\partSig}{U_n}
\newcommand{\Cantor}{\mathfrak{c}}
\newtheorem{definition}{Definition}[section]
\newtheorem{corollary}{Corollary}[section]
\newtheorem{proposition}{Proposition}[section]
\newtheorem{theorem}{Theorem}[section]
\newtheorem{lemma}{Lemma}[section]
\newtheorem{remark}{Remark}[section]
\newtheorem{example}{Example}[section]
\author{Hiroya Hashimoto}
\address[H.~Hashimoto]{Clinical Research Center, National Hospital Organization Nagoya Medical Center}
\email{hiroyanovs01law@gmail.com}
\author{Takahiro Tsuchiya$^{*}$}
\thanks{$*$ Corresponding author}
\address[T.~Tsuchiya]{School of Computer Science and Engineering, The University of Aizu}
\email[Corresponding author]{suci@probab.com}
\subjclass[2010]{ 
Primary  {60J55}; 
Secondary {41A25}. 
}
\keywords{
Stochastic differential equation,  
Stability problems, 
Convergence rate, 
Local time. 
}
\title{
Stability problems for Cantor stochastic differential equations 
}
\begin{document}

\begin{abstract} 
We consider driftless stochastic differential equations and the diffusions starting from the positive half line. 
It is shown that the Feller test for explosions gives 
a necessary and sufficient condition to hold pathwise uniqueness 
for diffusion coefficients that are positive and monotonically increasing or decreasing on the positive half line 
and the value at the origin is zero. 
Then, stability problems are studied from the aspect of 
H\"older-continuity and a generalized Nakao-Le Gall condition. 
Comparing the convergence rate of H\"older-continuous case, 
the sharpness and stability of the Nakao-Le Gall condition on Cantor stochastic differential equations is confirmed. 
Furthermore, using the Malliavin calculus, 
we construct a smooth solution 
to degenerate second order Fokker-Planck equations under weak conditions on the coefficients. 
\end{abstract}

\maketitle



\section{Introduction}
Given an interval $I=(0,\infty)$ and a real-valued Borel function $\sigma: \bR \to \bR$, 
let us consider 
\begin{align}
X (t) -X (0) = \int_0^t \sigma (X (s)) dB_s, \ X(0) =x_0 \in I. \tag{1}
\end{align}
Skorokhod \cite{SkoroMR0185620} showed that if $\sigma$ is continuous 
that the stochastic differential equations (SDEs) have a weak solution on a filtered probability space.  
Moreover, if 
$\sigma^{-2}$ is integrable over a neighborhood of $x$ for all $x \in I$, 
it follows from Engelbert and Schmidt \cite{Engelbert:1985aa} 
that the weak solution exists law up to stopping time $S$,  
satisfying $\bP (S=\inf \{t \geq 0 : X (t) =0 \})=1$, where the notation $\inf \emptyset$ means $+\infty$.  

Therefore, we are interested in studying the behavior of the solution around the boundary point $0$, 
which has an identity as scale function and speed measure. In fact, we present the following proposition.  
\begin{proposition}\label{proposition iff}
Let $c$ be in $I$ and $\sigma$ be a real-valued Borel function 
that monotonically increases or decreases in $I$ 
such that $(\sigma (x))^2>0$ for every $x \in I$. 
Then, pathwise uniqueness holds for $(\ref{sde0})$ if 
\begin{align*}
\mu (0+) &=\lim_{x \downarrow 0} \int_{x}^{c} \frac{c-y}{(\sigma (y))^2} dy =\infty. 
\end{align*}

Moreover, suppose that $\sigma (0)=0$. 
Then, pathwise uniqueness holds for $(\ref{sde0})$ if and only if $\mu (0+)=\infty.$
\end{proposition}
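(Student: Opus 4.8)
The forward implication — that $\mu(0+)=\infty$ already forces pathwise uniqueness — is the first assertion of the proposition, and I would prove it by localization. On each $[1/n,n]\subset I$ the coefficient $\sigma$ is bounded, bounded away from $0$ (by monotonicity together with $\sigma^2>0$ on $I$), and of bounded variation; after modifying $\sigma$ outside $[1/n,n]$ to a globally bounded, non-degenerate, monotone function, the Nakao--Le Gall pathwise-uniqueness criterion applies and any two solutions $X,\tilde X$ of $(1)$ driven by the same Brownian motion with $X(0)=\tilde X(0)$ coincide up to the exit time $\tau_n$ of $(1/n,n)$. A solution of $(1)$ is a continuous local martingale, hence does not explode, and since $\mu(0+)=\infty$ the point $0$ is non-enterable; so as $n\to\infty$ the $\tau_n$ increase to the (possibly infinite) hitting time $S$ of $0$, at which both processes equal $0$ and stay there (non-enterability of $0$). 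Hence $X\equiv\tilde X$.

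For the converse I argue the contrapositive: if $\sigma(0)=0$ and $\mu(0+)<\infty$, I would build two solutions of $(1)$ on one Brownian motion, with the same start $x_0\in I$, that disagree with positive probability. The key preliminary is the nature of the endpoint $0$. Since $c-y\ge c/2$ for $y$ near $0$ and $\sigma^{-2}$ is bounded on $[c/2,c]$, $\mu(0+)<\infty$ yields $\int_0^c\sigma(y)^{-2}\,dy<\infty$, hence also $\int_0^c y\,\sigma(y)^{-2}\,dy=c\int_0^c\sigma^{-2}-\int_0^c(c-y)\sigma^{-2}<\infty$; read as the Feller functionals of the diffusion on natural scale with speed density $2\sigma^{-2}$, this says exactly that $0$ is a \emph{regular} boundary. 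Consequently $\bP_{x_0}(S<\infty)>0$ by the Feller test, where $S=\inf\{t\ge 0:X(t)=0\}$, and — $0$ being a zero of $\sigma$ at which $\sigma^{-2}$ is locally integrable — the Engelbert--Schmidt theory provides a solution $Y$ of $(1)$ with $Y(0)=0$ that leaves the origin at once (in particular $Y\not\equiv 0$).

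I would then carry out the classical branching at a reachable zero. Take a weak solution $X^{(1)}$ of $(1)$ with $X^{(1)}(0)=x_0$ on a space carrying a Brownian motion $B$, and absorb it at $0$: since $\sigma(0)=0$, the process $X^{(1)}(\cdot\wedge S)$ still solves $(1)$ because $\int_S^t\sigma(0)\,dB=0$. By the strong Markov property of $B$ at $S$ the increments $\widehat B_t:=B_{S+t}-B_S$ form a Brownian motion, and I would patch $X^{(1)}|_{[0,S]}$ with a copy of $Y$ driven by $\widehat B$ to get a process $X^{(2)}$ that again solves $(1)$ with driving noise $B$ and start $x_0$. On $\{S<\infty\}$, of positive probability, $X^{(1)}\equiv 0$ after $S$ while $X^{(2)}$ is nonzero on every right-neighbourhood of $S$, so $\bP(X^{(1)}\ne X^{(2)})>0$ and pathwise uniqueness fails.

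The delicate point is this patching: the solution from $0$ must be realized as a process adapted to a Brownian motion built from $\widehat B$, for which the natural tool is the Engelbert--Schmidt time-change representation — the solution is a Brownian motion run under the speed-measure clock, and regularity of $0$ (finiteness of $\int_0^c\sigma^{-2}$) makes it non-sticky, so the process spends zero Lebesgue time at $0$ and the driving Brownian motion is recoverable (augmented by independent noise on $\{X=0\}$) — and one must then verify that the concatenation at the junction time $S$ is genuinely a solution of $(1)$ relative to $B$ with the right adaptedness. That is where the real work lies, and where the behaviour of $\sigma$ on both sides of $0$ enters (e.g. for the odd extension the left-hand integrability is again $\mu(0+)<\infty$); by comparison the boundary analysis and the monotonicity estimates above are routine.
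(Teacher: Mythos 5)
Your proposal is correct and follows essentially the same route as the paper: the ``if'' direction by localizing away from the boundary, invoking the Nakao--Le Gall criterion for the truncated (monotone, non-degenerate) coefficient, and letting the localization level tend to $0$ using non-enterability of the origin; the ``only if'' direction by the contrapositive, showing $\mu(0+)<\infty$ makes $0$ a regular (exit-and-entrance) boundary reached with positive probability, then branching at the hitting time between the absorbed solution and a non-trivial Engelbert--Schmidt time-changed solution started from $0$. The only differences are cosmetic (you localize at both ends of $I$ and you are more explicit than the paper about the adaptedness issues in the concatenation step, which the paper also treats informally).
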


We first investigate the so-called stability problem proposed by Stroock and Varadhan \cite{SV1979}. 
More precisely, we consider convergence that for Borel measurable functions $\{ \sigma_n \}_{n \in \bN}$, the series of solutions $\{ X_n \}$,  
\begin{align*}
X_n (t) -X_n (0) = \int_0^t \sigma_n(X_{n} (s)) dB_s, \ X_n (0) ={x_0} \in I, 
\end{align*}
converges to $X$ when $\SDEa_n$ tend to $\sigma$ in the following sense, 
\[
\Delta_n := \sup_{x \in \bR} | \sigma (x) - \sigma_n (x) |. 
\]

By Proposition \ref{proposition iff}, 
for the H\"older-continuous diffusion coefficients with exponent $\Holderindex \in [0,1]$,  
pathwise uniqueness holds if and only if $\Holderindex \in [\frac{1}{2},1]$. 
Applying the so-called Yamada-Watanabe method \cite{YW} (cf.~and \cite{Del1998}) to the stability problems 
in Section \ref{Holder continuous diffusion coefficients}, 
we prove the following theorem: 
\begin{theorem}[$\epsilon =0$]\label{theorem for classical stability problems in sup norm}
Let $\Holderindex \in [0, 1]$. 
Suppose that 
$\| \sigma \|_{\infty} = \sup_{x \in \bR} | \sigma (x) | <\infty$ and 
there exists some $c_{\Holderindex}>0$ such that 
for all $x, y \in \bD$ satisfying $|x-y|\leq 1$, $| \sigma (x)- \sigma (y) | \leq  c_{\Holderindex} |x-y|^{\Holderindex}$. 
Moreover, suppose that $X_n$ is a strong solution for all $n \in \mathbb{N}$. 
Then, for any $T>0$, there exists a $C_2 (T,\Holderindex) >0 $ dependent on $\Holderindex$ and $T$ such that 
for every $n \in \bN$ satisfying $\Delta_n <2^{-\Holderindex}$, 
\begin{align*}
 \bE (\sup_{0 \leq t \leq T}|X(t) -X_n (t) |)^2 \leq C_2(T, \Holderindex)  \times 
	\begin{cases}
	\displaystyle  (- \log \Delta_n )^{-\frac{1}{2}} \ & \text{ if } \Holderindex=\frac{1}{2},  \\
	\Delta_n^{ 1- (1/2\Holderindex) }  & \text{ if }  \Holderindex \in (\frac{1}{2},1]. 
	\end{cases}
\end{align*}
\end{theorem}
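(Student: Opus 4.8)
The plan is to run the Yamada--Watanabe contraction argument in a quantitative form to control $\bE|X(t)-X_n(t)|$, and then to upgrade this to a bound on $\bE(\sup_{0\le t\le T}|X(t)-X_n(t)|)^2$ using $\Holderindex\ge\tfrac12$ together with the boundedness of $\sigma$. Write $R_t:=X(t)-X_n(t)$. Since $X,X_n$ are strong solutions of driftless SDEs driven by the same $B$ and $\sigma,\sigma_n$ are bounded (and $\sigma(0)=0$, so absorption at the origin does not spoil the identity below), $R$ is a continuous square-integrable martingale with $R_0=0$, $dR_t=\Theta_t\,dB_t$, and $\Theta_t^2\le 2(\sigma(X(t))-\sigma(X_n(t)))^2+2\Delta_n^2$. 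The first observation is the elementary inequality $(\sigma(x)-\sigma(y))^2\le C_\ast|x-y|$ for all $x,y\in\bR$, with $C_\ast:=\max(c_{\Holderindex}^2,4\|\sigma\|_\infty^2)$: on $|x-y|\le1$ it is the H\"older bound combined with $|x-y|^{2\Holderindex}\le|x-y|$ (this is where $2\Holderindex\ge1$ is used), and on $|x-y|>1$ it is boundedness. Hence, by Doob's $L^2$ inequality,
\begin{align*}
\bE\Big(\sup_{0\le t\le T}|R_t|\Big)^2\le 4\,\bE\langle R\rangle_T\le 8C_\ast\int_0^T\bE|R_s|\,ds+8\Delta_n^2\,T,
\end{align*}
so it suffices to bound $\sup_{0\le s\le T}\bE|R_s|$ by a constant times the asserted rate, the $\Delta_n^2$ term then being of strictly lower order.

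For $\bE|R_s|$ I would use a $C^2$ even function $\phi$ approximating $x\mapsto|x|$ with a free scale parameter, built so that $\phi''\ge0$, $\phi'$ increases from $0$ to $1$ on $[0,\infty)$ (whence $0\le|x|-\phi(x)\le\mathrm{err}$ with a scale-dependent $\mathrm{err}$, in fact $\mathrm{err}\le\int u\,\phi''(u)\,du$ over the support), and $\operatorname{supp}\phi''$ is contained in the unit scale, so that the \emph{local} H\"older hypothesis can be invoked there. Applying It\^o's formula to $\phi(R_t)$, the martingale part is mean zero ($|\phi'|\le1$, $\Theta$ bounded), and using $\Theta_s^2\le 2c_{\Holderindex}^2|R_s|^{2\Holderindex}+2\Delta_n^2$ on $\operatorname{supp}\phi''$,
\begin{align*}
\bE\phi(R_t)=\tfrac12\,\bE\!\int_0^t\phi''(R_s)\,\Theta_s^2\,ds\le \bE\!\int_0^t\phi''(R_s)\big(c_{\Holderindex}^2|R_s|^{2\Holderindex}+\Delta_n^2\big)\,ds.
\end{align*}
If the construction delivers the pointwise bounds $\phi''(u)|u|^{2\Holderindex}\le\kappa$ and $\|\phi''\|_\infty\le K$, this yields $\bE|R_t|\le T(c_{\Holderindex}^2\kappa+K\Delta_n^2)+\mathrm{err}$, so everything reduces to a good choice of $\phi$, i.e.\ of $(\kappa,K,\mathrm{err})$.

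For $\Holderindex=\tfrac12$ I would take the classical Yamada--Watanabe functions: $a_0=1$, $a_m=\exp(-c_{\Holderindex}^2m(m+1)/2)$, and $\phi=\phi_m$ supported on $\{a_m\le|u|\le a_{m-1}\}$ with $0\le\phi_m''(u)\le\tfrac{2}{mc_{\Holderindex}^2|u|}$ and $\int_0^\infty\phi_m''=1$; then $c_{\Holderindex}^2\kappa=2/m$, $K\le\tfrac{2}{mc_{\Holderindex}^2a_m}$, $\mathrm{err}\le a_{m-1}$, so $\bE|R_t|\le\tfrac{2T}{m}\big(1+\tfrac{\Delta_n^2}{c_{\Holderindex}^2a_m}\big)+a_{m-1}$. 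Choosing $m=m_n$ the largest integer with $a_m\ge\Delta_n^2$ makes $\Delta_n^2/a_m\le1$, forces $m_n\asymp\sqrt{-\log\Delta_n}$, and makes $a_{m_n-1}$ super-polynomially small in $\Delta_n$, so $\sup_s\bE|R_s|\lesssim(-\log\Delta_n)^{-1/2}$, which with the first display gives the case $\Holderindex=\tfrac12$. For $\Holderindex\in(\tfrac12,1)$ the classical sequence does not exist (because $\int_{0+}u^{-2\Holderindex}\,du<\infty$), so I would instead use a single-scale function: for $\delta\in(0,1)$, $\phi=\phi_\delta$ supported on $\{\delta\le|u|\le1\}$ with $0\le\phi_\delta''(u)\le\tfrac{2\delta^{2\Holderindex-1}}{|u|^{2\Holderindex}}$ and $\int_0^\infty\phi_\delta''=1$; then $c_{\Holderindex}^2\kappa\le2c_{\Holderindex}^2\delta^{2\Holderindex-1}$, $K\le2/\delta$, $\mathrm{err}\lesssim\delta^{2\Holderindex-1}$, so $\bE|R_t|\lesssim\delta^{2\Holderindex-1}+\Delta_n^2/\delta$; optimizing at $\delta\asymp\Delta_n^{1/\Holderindex}$ gives $\sup_s\bE|R_s|\lesssim\Delta_n^{(2\Holderindex-1)/\Holderindex}\le\Delta_n^{\,1-1/(2\Holderindex)}$ (the last inequality from $(2\Holderindex-1)/\Holderindex\ge1-1/(2\Holderindex)$ for $\Holderindex\ge\tfrac12$ and $\Delta_n<1$), and the first display closes this case. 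Finally, for $\Holderindex=1$, $\sigma$ is globally Lipschitz, and the standard Gr\"onwall estimate gives $\bE(\sup_{0\le t\le T}|R_t|)^2\lesssim\Delta_n^2\le\Delta_n^{1/2}$. (When $\Delta_n$ is merely below the threshold $2^{-\Holderindex}$ but not small, one takes $\delta$ a fixed small constant in the single-scale case, and the bounds are absorbed into $C_2(T,\Holderindex)$.)

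The step I expect to be the genuine obstacle is the explicit mollified construction of $\phi_m$ and $\phi_\delta$ realizing \emph{simultaneously} the sharp pointwise bounds on $\phi''$, the normalization $\int_0^\infty\phi''=1$ (equivalently $0\le\phi'\le1$, which controls $\mathrm{err}$), and --- crucially --- the containment of $\operatorname{supp}\phi''$ in the unit scale, so that the \emph{local} H\"older assumption on $\sigma$ is legitimately available on that support. Everything else --- the It\^o and Doob estimates, the elementary bound on $(\sigma(x)-\sigma(y))^2$, and the scalar optimization over $m$ or $\delta$ --- is routine bookkeeping.
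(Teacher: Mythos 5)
Your proposal is correct and delivers exactly the stated rates (in fact slightly more). The overall strategy --- a Yamada--Watanabe approximation of $|x|$ plus It\^o's formula for an $L^1$ bound, then Doob's maximal inequality for the supremum --- is the same as the paper's, but both halves differ in ways worth recording. For the $L^1$ step the paper uses a single two-parameter test function (Lemma \ref{main claim} with $\rho(u)=u^{2\Holderindex}$ and support $(\Yp_n,\Yq_n)$, taking $\Yq_n=\sqrt{\Yp_n}$ or $\Yq_n=2\Yp_n$), which yields the stronger intermediate rate $(-\log\Delta_n)^{-1}$ at $\Holderindex=\tfrac12$; your multi-scale sequence gives only $(-\log\Delta_n)^{-1/2}$ there, while your single-scale $\phi_\delta$ for $\Holderindex>\tfrac12$ reproduces the paper's $\Delta_n^{2-1/\Holderindex}$. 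The more substantive difference is the upgrade to $\bE(\sup_{t\le T}|R_t|)^2$: the paper splits $(\sigma(X)-\sigma(X_n))^2$ at a threshold $r$, applies Chebyshev, and optimizes $r=(\sup_t\bE|R_t|)^{1/2}$, which yields only $\bE(\sup_t|R_t|)^2\lesssim(\sup_t\bE|R_t|)^{1/2}$; your observation that $2\Holderindex\ge 1$ forces the global linear bound $(\sigma(x)-\sigma(y))^2\le C_\ast|x-y|$ gives the linear relation $\bE(\sup_t|R_t|)^2\lesssim\sup_t\bE|R_t|+\Delta_n^2$, which is sharper. The two discrepancies exactly offset at $\Holderindex=\tfrac12$ (both routes end at $(-\log\Delta_n)^{-1/2}$), and for $\Holderindex\in(\tfrac12,1]$ your route actually proves the stronger rate $\Delta_n^{2-1/\Holderindex}$ before you deliberately weaken it to $\Delta_n^{1-1/(2\Holderindex)}$. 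The one step you flag as a possible obstacle --- the explicit construction of $\phi_m$ and $\phi_\delta$ with the stated normalization, pointwise bounds, and support inside the unit scale --- is indeed only bookkeeping (take $\phi_\delta''(u)$ proportional to $u^{-2\Holderindex}1_{[\delta,1]}(u)$ and normalize; the constants then depend on $\Holderindex$ but not on $n$), and the local-versus-global H\"older issue is handled correctly since $\operatorname{supp}\phi''\subset\{|u|\le 1\}$ and the complementary region is covered by $\|\sigma\|_\infty<\infty$.
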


For Euler-Maruyama's schemes for SDEs with Lipschitz continuous coefficients, 
it is known that Euler-Maruyama's convergence rate is $n^{-\frac{1}{2}}$,  
where the uniform partition is given by $\{ i T /n: i=0,1, \dots, n\} $. 
The distribution of the limit law of the error process 
for Euler-Maruyama's approximation process 
was established through many contributions. 
We refer the reader to \cite[Section 5]{Protterkurtz1991}, \cite{BallyTalay1996}; 
for more details,  see the excellent survey. \cite{Kohatsu2011zbMATH06078912}.  

Recently, 
other efforts have been directed toward extending the this of convergence into {\it stronger}  topologies 
than the one given by {\it weak} convergence of processes,  
as this {\it weak} convergence result 
does not provide full information regarding the rate of convergence of various other functionals. 
Yan in \cite{Yan2002} estimated the bounded rate of Euler-Maruyama's convergence
when $\Holderindex \in (\frac{1}{2},1]$. 
Moreover,  for $\Holderindex = \frac{1}{2}$, 
the log estimation was studied deeply by Gy\"ongy and R\'asonyi in \cite{GR2011}. 
In fact, Theorem \ref{theorem for classical stability problems in sup norm} is 
an analogy of their results for stability problems. 

To the best of our knowledge, 
it is an open problem whether the {\it strong} convergence rates of both 
Euler-Maruyama's schemes and the stability problems under non-Lipschitz coefficients 
are optimal. 
To study the convergence rate under non Lipschitz continuous diffusion coefficients, 
we propose an another approach: 
The key is to consider 
H\"older continuous as a generalized Nakao-Le Gall condition formally defined in Section \ref{Nakao-Le Gall}. 
It induces the following spatial stability problems:  
For $\infc \geq 0$ we introduce
\[
\sigma_n^{\infc} (x) =\infc + \sigma_n (x) \text{ for } x \in I
\]
and 
\begin{align*}
X_n^{\infc} (t) -X_n^{\infc} (0) = \int_0^t \sigma_n^{\infc} (X_n^{\infc} (s)) dB_s, \ X_n^{\infc} (0) ={x_0} \in I. 
\end{align*}
Our main result is as follows: 
\begin{theorem}[$0 < \infc <1$]\label{gNLupper in sup norm}
Suppose that the assumption of Theorem \ref{theorem for classical stability problems in sup norm} holds. 
Furthermore, 
suppose that 
$\sigma$ is non-negative and monotonically increasing or decreasing, 
$\sigma_n$ is 
non-negative in $\bR$ for all $n \in \mathbb{N}$
and $\sup_{n \in \bN}\| \sigma_n \|_{\infty} <\infty$. 
Then, for  any $T>0$, there exists a $T$-dependent $C_4 (T)>0 $ such that 
for every $n \in \bN$ satisfying $\Delta_n <2^{-\frac{1}{2}(\Holderindex+1) }$ and for all $\epsilon \in (0,1)$, 
\begin{align*}
 \bE (\sup_{0 \leq t \leq T }|X^{\infc} (t) -X_n^{\infc} (t)| )^2 \leq \infc^{-3}   C_4 (T) \times 
	\begin{cases}
		\left( -\log  \Delta_n\right)^{-\frac{1}{2}} \ & \text{if } \Holderindex=0,  \\
		\Delta_n^{(1-1/(\Holderindex+1)) } \ &\text{if } \Holderindex \in (0,1] . 
	\end{cases}
\end{align*}
\end{theorem}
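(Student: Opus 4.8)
\emph{Proof strategy.} I would run a Yamada--Watanabe argument for the difference $Y:=X^{\infc}-X_n^{\infc}$, using the lower bound $\SDEa^{\infc}\ge\infc$ and the monotonicity of $\SDEa$ to recast the H\"older hypothesis as the generalized Nakao--Le Gall condition of Section \ref{Nakao-Le Gall}; this upgrades the effective continuity exponent from $\Holderindex$ to $(\Holderindex+1)/2$, which is exactly the shift that carries the rates of Theorem \ref{theorem for classical stability problems in sup norm} into those claimed here. Note $Y$ is a continuous local martingale with $Y(0)=0$ and $d\langle Y\rangle_s=\alpha_s^{2}\,ds$, where $\alpha_s:=\SDEa^{\infc}(X^{\infc}(s))-\SDEa_n^{\infc}(X_n^{\infc}(s))$. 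Splitting $\alpha_s=[\SDEa(X^{\infc}(s))-\SDEa(X_n^{\infc}(s))]+[\SDEa-\SDEa_n](X_n^{\infc}(s))$, bounding the second bracket by $\Delta_n$, bounding \emph{one} factor of $|\SDEa(X^{\infc}(s))-\SDEa(X_n^{\infc}(s))|$ by the H\"older estimate $c_{\Holderindex}|Y(s)|^{\Holderindex}$, and rewriting the \emph{other} factor, using monotonicity of $\SDEa$, as $\int_{X^{\infc}(s)\wedge X_n^{\infc}(s)}^{X^{\infc}(s)\vee X_n^{\infc}(s)}|d\SDEa(u)|$, one obtains on $\{|Y(s)|\le1\}$
\[
d\langle Y\rangle_s\ \le\ 2c_{\Holderindex}\,|Y(s)|^{\Holderindex}\!\!\int_{X^{\infc}(s)\wedge X_n^{\infc}(s)}^{\,X^{\infc}(s)\vee X_n^{\infc}(s)}\!\!\!|d\SDEa(u)|\;ds\ +\ 2\Delta_n^{2}\,ds,
\]
where $\int_{\bR}|d\SDEa|\le 2\|\SDEa\|_{\infty}<\infty$ since $\SDEa$ is monotone and bounded; this is precisely the Nakao--Le Gall type bound (the regime $|Y(s)|>1$ is treated by the crude bound $|\alpha_s|\le\|\SDEa^{\infc}\|_{\infty}+\sup_n\|\SDEa_n\|_{\infty}$ and does not affect the rate).

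Next I would introduce Yamada--Watanabe approximants adapted to the exponent $\Holderindex$: a geometric sequence $a_0>a_1>\cdots\downarrow0$ and nonnegative $\psi_k$ supported in $(a_k,a_{k-1})$ with $\int\psi_k=1$ and, on the support, $\psi_k(y)\le 2c_k/(k\,y^{\Holderindex+1})$ with $c_k$ the normalizing constant, together with $\varphi_k(y):=\int_0^{|y|}\!\int_0^z\psi_k(u)\,du\,dz$, so that $0\le|y|-\varphi_k(y)\le a_{k-1}$ and $\varphi_k''=\psi_k(|\cdot|)$. It\^o--Tanaka applied to $\varphi_k(Y(\cdot))$ together with the vanishing of the local martingale part leaves $\bE\varphi_k(Y(t))=\tfrac12\bE\int_0^t\psi_k(|Y(s)|)\,d\langle Y\rangle_s$; the $\Delta_n^{2}$ part is bounded by $\Delta_n^{2}\,t\,\sup\psi_k$, and in the main part one interchanges the $ds$- and $|d\SDEa(u)|$-integrations (Tonelli) and uses that on $\{u\text{ lies between }X^{\infc}(s)\text{ and }X_n^{\infc}(s)\}$ one has $|X^{\infc}(s)-u|\le|Y(s)|<a_{k-1}$. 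The crucial point is the ellipticity-driven occupation estimate: since $\SDEa^{\infc}\ge\infc$, the occupation times formula gives $\int_0^t\mathbf{1}_{\{|X^{\infc}(s)-u|<r\}}\,ds\le\infc^{-2}\int_{u-r}^{u+r}L_t^b(X^{\infc})\,db$, while Tanaka's formula gives $\bE[L_t^b(X^{\infc})]=\bE|X^{\infc}(t)-b|-|X^{\infc}(0)-b|\le\|\SDEa^{\infc}\|_{\infty}\sqrt t$ uniformly in $b$; the $u$-integration against $|d\SDEa|$ costs only the finite total variation of $\SDEa$. Thus near a small level the occupation of $X^{\infc}$ is $\lesssim\infc^{-2}$ times the window width, and the $|Y(s)|^{2\Holderindex}$-weight of Theorem \ref{theorem for classical stability problems in sup norm} is replaced by a weight of size $a_k^{\Holderindex-1}a_{k-1}$, i.e.\ by the weight one would have at exponent $(\Holderindex+1)/2$.

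Collecting terms, $\bE|Y(t)|\le a_{k-1}+\bE\varphi_k(Y(t))$ is dominated by $a_{k-1}$, a scale-decreasing term of order $\infc^{-2}\|\SDEa^{\infc}\|_{\infty}\|\SDEa\|_{\infty}\sqrt t\,k^{-1}a_k^{\Holderindex-1}a_{k-1}$, and a scale-increasing term of order $\Delta_n^{2}\,t\,\sup\psi_k$; I would optimize the geometric scale (the constraint $\int\psi_k=1$ forces $a_{k-1}/a_k$ to grow with $k$) and the depth $k$ against $\Delta_n$, then for $\Holderindex>0$ pass to the supremum by taking the sup in $|Y(t)|\le a_{k-1}+M_t+\tfrac12\int_0^t\psi_k(|Y(s)|)\,d\langle Y\rangle_s$ ($M$ the martingale part, whose integrand vanishes on $\{|Y|<a_k\}$), applying Doob and Burkholder--Davis--Gundy to $M$, and re-estimating $\bE\langle Y\rangle_T$ by the same occupation bound; for $\Holderindex=0$ one tunes the scale logarithmically as in \cite{GR2011}. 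Carrying the constants through yields $\bE(\sup_{0\le t\le T}|Y(t)|)^2\le\infc^{-3}C_4(T)\,(-\log\Delta_n)^{-1/2}$ when $\Holderindex=0$ and $\le\infc^{-3}C_4(T)\,\Delta_n^{1-1/(\Holderindex+1)}$ when $\Holderindex\in(0,1]$, the exponent $1-1/(\Holderindex+1)$ being the value that the effective exponent $(\Holderindex+1)/2$ produces through the balance underlying Theorem \ref{theorem for classical stability problems in sup norm}.

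The main difficulty is the occupation/local-time control of $Y$: $Y$ is not itself a diffusion and its coefficient $\alpha_s$ may vanish on a set of positive time-measure, so no occupation density of $Y$ is directly available; all estimates must be funneled through the genuinely $\infc$-elliptic diffusions $X^{\infc}$ and $X_n^{\infc}$, which is exactly the role of the monotone-$\SDEa$ rewriting --- it converts a quantity depending on both paths into one localized near the single point $X^{\infc}(s)$ --- and then the Yamada--Watanabe iteration must be closed without any circular loss. Arranging that the $\Delta_n^{2}$ contribution, the logarithmic $\Holderindex=0$ case, and the passage to the supremum norm all respect the same rate and the same uniform $\infc^{-3}$ dependence is the delicate part of the argument.
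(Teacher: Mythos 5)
Your overall architecture coincides with the paper's: reduce the H\"older hypothesis to the generalized Nakao--Le Gall condition with $f=\sigma$ (the exponent shift $\Holderindex\mapsto\Holderindexl=\Holderindex/2$, i.e.\ $2\Holderindexl+1=\Holderindex+1$), run a Yamada--Watanabe iteration on $Y=X^{\infc}-X_n^{\infc}$, and exploit $\SDEa^{\infc}\geq\infc$ to trade time integrals for local times of genuinely elliptic processes, finishing with a Chebyshev/Doob (or BDG) passage to the sup norm. For $\Holderindex\in(0,1]$ your version closes, because there the Yamada--Watanabe scales satisfy $a_{k-1}=2a_k$ and the extra factor you pay is an absolute constant. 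The difference from the paper, and the place where your argument has a genuine gap, is the treatment of the quantity $|\sigma(X^{\infc}(s))-\sigma(X_n^{\infc}(s))|/|Y(s)|$ on the annulus $\{a_k<|Y(s)|<a_{k-1}\}$. You bound the numerator by the total variation of $\sigma$ over a window of width $|Y(s)|<a_{k-1}$ around $X^{\infc}(s)$, then bound $|Y(s)|^{-1}\leq a_k^{-1}$ and use the occupation density of $X^{\infc}$ alone; the net cost is a factor $a_{k-1}/a_k$ multiplying the uniform bound $\infc^{-2}\sqrt{t}\,\|\sigma\|_{\infty}$. After dividing by the normalizing constant $c_k=\int_{a_k}^{a_{k-1}}y^{-(\Holderindex+1)}dy$, this gives (for $\Holderindex=0$) a term of order $(a_{k-1}/a_k)/\log(a_{k-1}/a_k)$, which does \emph{not} tend to zero --- it blows up --- under the Gy\"ongy--R\'asonyi logarithmic tuning $a_{k-1}=\sqrt{a_k}$ that the $(-\log\Delta_n)^{-1/2}$ rate requires. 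Even a dyadic refinement of the annulus only brings this down to a constant, so the $\Holderindex=0$ case of the theorem is not reached by your estimate.

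The missing idea is the paper's interpolation device. Writing $Z^{\Ztheta}_n(s)=\Ztheta X^{\infc}(s)+(1-\Ztheta)X_n^{\infc}(s)$, one has (after smoothing $\sigma$ to $f_l$) the exact identity $|f_l(X^{\infc}(s))-f_l(X_n^{\infc}(s))|/|Y(s)|=\int_0^1 f_l'(Z^{\Ztheta}_n(s))\,d\Ztheta$: the Jacobian of the change of variables $a=\Ztheta X^{\infc}+(1-\Ztheta)X_n^{\infc}$ cancels the $|Y(s)|^{-1}$ exactly, with no loss of a ratio of scales. Since $Z^{\Ztheta}_n$ is itself an It\^o process with diffusion coefficient $\Ztheta\SDEa^{\infc}(X^{\infc})+(1-\Ztheta)\SDEa^{\infc}_n(X_n^{\infc})\geq\infc$, the occupation-times formula applies to $Z^{\Ztheta}_n$ for each fixed $\Ztheta$, and the Meyer--Tanaka bound $\bE L_t^a(Z^{\Ztheta}_n)\leq(\|\SDEa\|_{\infty}+\sup_n\|\SDEa_n\|_{\infty})\sqrt{t}$ together with $\int f_l'\leq 2\|f_l\|_{\infty}$ yields a bound $\infc^{-2}c_{\Holderindexl}\sqrt{t}$ that is \emph{uniform in the scales} $a_k,a_{k-1}$. (A separate occupation argument for $X^{\infc}$ and $X_n^{\infc}$ is still needed, but only to justify the a.e.\ convergence $f_l\to f$ along the paths.) Your closing remark that ``all estimates must be funneled through the elliptic diffusions $X^{\infc}$ and $X_n^{\infc}$'' is precisely where the argument goes astray: the interpolated process is equally elliptic and is the right object to funnel through. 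With that replacement your proof becomes the paper's proof of Lemmas \ref{gNLupper} and \ref{gNLupper2}; the remaining discrepancy (BDG on the martingale part versus the paper's Chebyshev splitting at level $r=(\sup_t\bE|Y(t)|)^{1/2}$) is cosmetic.
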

As concrete examples, we apply the above results to Cantor diffusion processes $X$, 
\begin{align*}
X(t) -X (0) = \int_0^t \overline{ \Cantor} (X (s) ) dB_s, \ X(0) =x_0 \in (0, 1). 
\end{align*}
where an extended Cantor function satisfies that $\overline{ \Cantor}(x)= \Cantor (x)1_{[0,1]}(x)+1_{(1, \infty)} (x)$ for $x \in \bR$ and 
the middle-$\lambda$ Cantor function $\Cantor$ is obtained 
by removing the middle $\lambda$ of a line segment 
where $\lambda$ is a real number in $(0,1)$. Formally, the middle-$\lambda$ Cantor function 
$\Cantor$ is obtained as the
the limit procedure of a series of bounded, real-valued functions $\{ \Cantor_n \}_{n \in \bN} $ with supremum norms: 
that is, for every $n \in \bN$, $\Delta_n=\sup_{x \in \bR} | \Cantor (x) - \Cantor_n (x) | \leq 2^{-n+2}$. 
These functions are formally defined in Section \ref{Devil's staircases diffusion}. 

If $\lambda \in (0, \frac{1}{2})$, 
$\Cantor$ satisfy H\"older's continuity in Theorem \ref{theorem for classical stability problems in sup norm}, 
thus we obtain that, 
\begin{corollary}\label{Cantor estimation for Holder}
Let $T>0$ and $\lambda \in (0, \frac{1}{2}]$. 
Suppose that $\Cantor_0$ satisfies the condition that 
there exists some $c_{\Holderindex_{\lambda} }>0$ such that 
for all $x, y \in \bD$ satisfying $|x-y|\leq 1$, $| \Cantor_0 (x)-\Cantor_0 (y) | \leq c_{\Holderindex_{\lambda} } |x-y|^{\Holderindex_{\lambda} }$. 
There exists a $C(T, \lambda)>0$ dependent on $T$ and $\lambda$ such that 
\begin{align*}
 \bE (\sup_{0 \leq t \leq T}|X (t) - X_{n}  (t)|) \leq &C(T, \lambda) \times 
	\begin{cases}
	 n^{-\frac{1}{2}}\ &\text{if } \lambda=\frac{1}{2},  \\ 
	2^{-n \left( 1-( 1/2 \Holderindex_\lambda) \right) }\ &\text{if }\lambda \in (0, \frac{1}{2}),  
	\end{cases}
\end{align*}
where $\Holderindex_{\lambda} = { \log{2} }/ { \left( \log{2} - \log (1-\lambda)  \right) } \in [\frac{1}{2}, 1)$. 
\end{corollary}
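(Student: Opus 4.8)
The corollary is a specialisation of Theorem~\ref{theorem for classical stability problems in sup norm} to $\SDEa=\overline{\Cantor}$ and $\SDEa_n=\overline{\Cantor}_n$ (the extension of $\Cantor_n$ analogous to $\overline{\Cantor}$) with H\"older exponent $\Holderindex=\Holderindex_{\lambda}$, followed by the substitution of the bound $\Delta_n\le 2^{-n+2}$ recorded in Section~\ref{Devil's staircases diffusion}. So the plan is in three parts: verify the hypotheses of Theorem~\ref{theorem for classical stability problems in sup norm}; verify that $X$ and the $X_n$ are genuine strong solutions so that the theorem applies; and insert the rate.

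First, the hypotheses of Theorem~\ref{theorem for classical stability problems in sup norm}. Boundedness is immediate ($\|\overline{\Cantor}\|_{\infty}=1$, $\sup_n\|\overline{\Cantor}_n\|_{\infty}=1$), as are non-negativity and monotonicity of $\overline{\Cantor}$ and of each $\overline{\Cantor}_n$. The substantive point is the global H\"older bound $|\overline{\Cantor}(x)-\overline{\Cantor}(y)|\le c_{\Holderindex_{\lambda}}|x-y|^{\Holderindex_{\lambda}}$ whenever $|x-y|\le 1$. Since $\overline{\Cantor}$ is constant on $(-\infty,0]$ and on $[1,\infty)$ and equals $\Cantor$ on $[0,1]$, it is enough to establish this for $\Cantor$ on $[0,1]$ and to note that the mixed cases $x<0<y$ and $x<1<y$ reduce to the one-sided estimates at $0$ and at $1$. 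On $[0,1]$ I would argue through the self-similar (de~Rham) recursion defining $\{\Cantor_n\}$: with $\rho=(1-\lambda)/2$ the contraction ratio, on each generation-$k$ basic interval $\Cantor$ (and $\Cantor_n$ for $n\ge k$) is an affine copy of $\Cantor$ (resp.\ $\Cantor_{n-k}$), scaled by $\rho^k$ horizontally and $2^{-k}$ vertically, and since $2^{-k}=(\rho^{k})^{\Holderindex_{\lambda}}$ precisely when $\Holderindex_{\lambda}=\log 2/(\log 2-\log(1-\lambda))$, an induction on the generation carries the H\"older-$\Holderindex_{\lambda}$ estimate assumed for the seed $\Cantor_0$ to every $\Cantor_n$ with a constant that does not grow, and then to $\Cantor$ by the uniform limit; pairs of points lying in distinct generation-$k$ intervals are handled by monotonicity together with the flat pieces and the base case. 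This also pins down $\Holderindex_{\lambda}\in[\tfrac12,1)$ for $\lambda\in(0,\tfrac12]$, with $\Holderindex_{\lambda}=\tfrac12$ exactly at $\lambda=\tfrac12$.

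Second, strong solutions. Because $\Holderindex_{\lambda}\ge\tfrac12$ and $\overline{\Cantor}(0)=0$, the H\"older bound gives $(\overline{\Cantor}(y))^{2}\le c_{\Holderindex_{\lambda}}^{2}\,y^{2\Holderindex_{\lambda}}$ for $y$ near $0$ with $2\Holderindex_{\lambda}\ge 1$, whence
\[
\mu(0+)=\lim_{x\downarrow 0}\int_{x}^{c}\frac{c-y}{(\overline{\Cantor}(y))^{2}}\,dy=\infty .
\]
As $\overline{\Cantor}$ is moreover monotone and strictly positive on $(0,\infty)$, Proposition~\ref{proposition iff} yields pathwise uniqueness for the Cantor equation, and together with weak existence (Skorokhod, $\overline{\Cantor}$ being continuous) this produces the strong solution $X$. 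The same argument applied to $\overline{\Cantor}_n$ --- continuous, monotone, positive on $(0,\infty)$, vanishing at $0$, with $\mu_n(0+)=\infty$ --- shows that each $X_n$ is a strong solution. Hence Theorem~\ref{theorem for classical stability problems in sup norm} applies with $\Holderindex=\Holderindex_{\lambda}$.

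Third, the rate. For all sufficiently large $n$ we have $\Delta_n\le 2^{-n+2}<2^{-\Holderindex_{\lambda}}$ (since $2^{-\Holderindex_{\lambda}}>\tfrac12$), so Theorem~\ref{theorem for classical stability problems in sup norm} bounds $\bE(\sup_{0\le t\le T}|X(t)-X_n(t)|)$ by $C_2(T,\Holderindex_{\lambda})(-\log\Delta_n)^{-1/2}$ when $\lambda=\tfrac12$ and by $C_2(T,\Holderindex_{\lambda})\,\Delta_n^{\,1-1/(2\Holderindex_{\lambda})}$ when $\lambda\in(0,\tfrac12)$. Using $\Delta_n\le 2^{-n+2}$, hence $-\log\Delta_n\ge(n-2)\log 2$, the first right-hand side is at most a constant multiple of $n^{-1/2}$ and the second at most $4^{\,1-1/(2\Holderindex_{\lambda})}\,2^{-n(1-1/(2\Holderindex_{\lambda}))}$; absorbing these numerical constants, together with the finitely many small $n$ (where the left side is trivially bounded), into $C(T,\lambda)$ gives the asserted estimate. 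The only step with genuine content is the H\"older estimate: showing that generation-wise self-similarity upgrades the seed-level bound on $\Cantor_0$ to one uniform in the generation (hence in $n$), with the sharp exponent $\Holderindex_{\lambda}=\log 2/(\log 2-\log(1-\lambda))$; everything after that is a direct substitution into Theorem~\ref{theorem for classical stability problems in sup norm}.
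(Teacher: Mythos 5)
Your reduction is the same as the paper's: check the hypotheses of Theorem~\ref{theorem for classical stability problems in sup norm}, use the scaling form of $\Cantor_n$ near $0$ together with Proposition~\ref{proposition iff} to see that each $X_n$ (and $X$) is a strong solution, and then substitute $\Delta_n\le 2^{-n+2}$ from Lemma~\ref{Cantor vn}. The one place you genuinely diverge is the H\"older estimate $|\Cantor(x)-\Cantor(y)|\le c|x-y|^{\Holderindex_\lambda}$: you propose a generation-by-generation self-similarity induction starting from the seed $\Cantor_0$, whereas the paper (Lemma~\ref{C is Holder continuous}) gets it for the fixed point $\Cantor$ directly from subadditivity (following Dobo\v{s}) combined with the Gorin--Kukushkin bound $\Cantor(h)\le h^{\Holderindex_\lambda}$; the paper's route avoids having to show that the H\"older constant does not grow along the iteration, while yours has the advantage of giving a uniform-in-$n$ estimate for the approximants $\Cantor_n$ themselves, which is what you then reuse to get $\mu_n(0+)=\infty$ (the paper instead reads this off from the explicit rescaling identity $\Cantor_n(x)=2^{-(n-1)}\Cantor_0((2/(1-\lambda))^{n-1}x)$ near $0$). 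One caveat you inherit from the paper rather than introduce: Theorem~\ref{theorem for classical stability problems in sup norm} bounds the second moment $\bE(\sup_t|X(t)-X_n(t)|)^2$, so passing to the first moment on the left of the corollary without halving the exponents is exactly the step the paper also takes without comment; your write-up reproduces it rather than resolving it.
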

By Proposition \ref{proposition iff} again, 
the condition $\lambda \in  (\frac{1}{2}, 1)$ implies that pathwise uniqueness of Cantor SDEs does not hold. 
Moreover, $\Cantor_n^{\infc}$ with $\Cantor_0 (x) =0$ for $x \in [0,1]$ and $\infc \in (0,1)$
is not continuous but it is belong to a generalized Nakao-Le Gall condition for every $n \in \bN$. 
Therefore, its convergence rate can be estimated as follows.

\begin{corollary}\label{Cantor estimation for a generalized Nakao-Le Gall}
Let $T>0$. 
There exists  a $T$-dependent $C(T) >0$ such that 
for all $\infc \in (0,1)$ and $\lambda \in (0,1)$. 
\begin{align*}
 \bE (\sup_{0 \leq t \leq T}|X^{\infc} (t) - X_{n}^{\infc} (t)|) \leq 
 	& {\infc}^{-3} C(T) \times 
	2^{-n(1-1/{(\Holderindex_{\lambda}+1)} )} 
\end{align*}
where $\Holderindex_{\lambda} = { \log{2} }/ { \left( \log{2} - \log (1-\lambda)  \right) }\in (0, 1) $. 
\end{corollary}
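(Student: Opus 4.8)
The plan is to read Corollary~\ref{Cantor estimation for a generalized Nakao-Le Gall} off Theorem~\ref{gNLupper in sup norm}, applied with $\sigma=\overline{\Cantor}$, $\sigma_n=\Cantor_n$ (the iterates started from $\Cantor_0\equiv 0$ on $[0,1]$) and $\Holderindex=\Holderindex_\lambda$; the substance is then the verification of the hypotheses of that theorem, after which one only has to substitute the elementary rate $\Delta_n\le 2^{-n+2}$.

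First I would record the structural facts about the middle-$\lambda$ Cantor function. Because $\overline{\Cantor}=\Cantor\,1_{[0,1]}+1_{(1,\infty)}$ and $\Cantor$ is the uniform limit of its self-similar iteration, $\overline{\Cantor}$ is bounded by $1$, non-negative and monotonically non-decreasing on $\bR$, and the same bounds hold for every $\Cantor_n$, so $\sup_n\|\Cantor_n\|_\infty\le 1$. For the Hölder regularity of the limit I would exploit self-similarity: at generation $n$ the function $\Cantor$ is constant on each removed gap and increases by $2^{-n}$ across each of the $2^n$ surviving intervals of length $((1-\lambda)/2)^n$, so two points at distance at most $((1-\lambda)/2)^n$ meet only a bounded number of those intervals and $|\Cantor(x)-\Cantor(y)|\lesssim 2^{-n}$ there; optimizing over $n$ yields $|\Cantor(x)-\Cantor(y)|\le c_{\Holderindex_\lambda}|x-y|^{\Holderindex_\lambda}$ for $|x-y|\le 1$ with $\Holderindex_\lambda=\log 2/(\log 2-\log(1-\lambda))$. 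For $\lambda\in(0,1)$ this exponent lies in $(0,1)$, so $\overline{\Cantor}$ satisfies the Hölder hypothesis of Theorem~\ref{theorem for classical stability problems in sup norm}, and being non-negative and monotone it also satisfies the extra hypotheses of Theorem~\ref{gNLupper in sup norm}.

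The only hypothesis that is not immediate — and the point at which the shift $\infc>0$ is essential — is the existence of strong solutions to the approximating ($\infc$-shifted) equations. Here $\sigma_n^{\infc}=\infc+\Cantor_n$ is a bounded Borel function with $(\sigma_n^{\infc})^{-2}\le\infc^{-2}$ globally bounded, so a (global) weak solution exists by Engelbert--Schmidt \cite{Engelbert:1985aa}; moreover $\sigma_n^{\infc}$ is of bounded variation ($\Cantor_n$ is monotone) and uniformly elliptic, hence lies in the generalized Nakao--Le Gall class of Section~\ref{Nakao-Le Gall}, so pathwise uniqueness holds, and Yamada--Watanabe \cite{YW} then gives a strong solution $X_n^{\infc}$ for every $n$. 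The identical argument applied to $\sigma^{\infc}=\infc+\overline{\Cantor}$ (continuous, of bounded variation, uniformly elliptic) shows $X^{\infc}$ is pathwise unique, so the left-hand side of the corollary is well defined. Note that without the shift $\Cantor_n$ vanishes on a whole interval and this step breaks down, which is precisely why the continuous case (Corollary~\ref{Cantor estimation for Holder}) was confined to $\lambda\le\tfrac12$ whereas the present corollary covers all $\lambda\in(0,1)$.

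Finally I would invoke Theorem~\ref{gNLupper in sup norm} with $\Holderindex=\Holderindex_\lambda\in(0,1)$, so that only its second branch is in force: for every $n$ with $\Delta_n<2^{-\frac12(\Holderindex_\lambda+1)}$ the error is bounded by $\infc^{-3}C_4(T)\,\Delta_n^{1-1/(\Holderindex_\lambda+1)}$, and since $\Delta_n\le 2^{-n+2}$ with $1-1/(\Holderindex_\lambda+1)\in(0,1)$ this is at most $4\,\infc^{-3}C_4(T)\,2^{-n(1-1/(\Holderindex_\lambda+1))}$. The smallness constraint on $\Delta_n$ holds once $n\ge 3$, uniformly in $\lambda$, and for the finitely many remaining indices one absorbs the term into the constant using the crude bound $\bE\sup_{t\le T}|X^{\infc}(t)-X_n^{\infc}(t)|\le C'(T)$ (Burkholder--Davis--Gundy together with $\|\sigma^{\infc}\|_\infty,\|\sigma_n^{\infc}\|_\infty\le 2$, as $\infc<1$) and the trivial lower bound $\infc^{-3}2^{-n(1-1/(\Holderindex_\lambda+1))}\ge\tfrac12$. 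Choosing $C(T)$ accordingly gives the claimed estimate (after the elementary passage between the $\bE(\cdot)^2$ normalization of Theorem~\ref{gNLupper in sup norm} and the $\bE(\cdot)$ form stated here). I do not foresee a real obstacle: all the content sits in the hypothesis check above, and the remainder is bookkeeping.
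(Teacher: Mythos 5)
Your proposal is correct and follows essentially the same route as the paper: the paper's own proof likewise just verifies that $\infc>0$ together with the monotonicity of $\Cantor$ and $\Cantor_n$ gives well-posed (pathwise unique, strong) solutions, and then invokes Theorem~\ref{gNLupper in sup norm} with the bound $\Delta_n\le 2^{-n+2}$ from Lemma~\ref{Cantor vn} and the H\"older exponent $\Holderindex_\lambda$ from Lemma~\ref{C is Holder continuous}. Your treatment is in fact somewhat more careful than the paper's (absorbing the finitely many $n$ violating the smallness constraint, and flagging the passage between the $\bE(\cdot)^2$ and $\bE(\cdot)$ normalizations), but the underlying argument is the same.
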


Finally, 
we rephrase the results in terms of partial differential equation (PDE) analysis. 
More precisely, 
we show the existence of the solutions of Fokker-Planck's equation with degenerate coefficients. 
A typical example is  
\begin{align*}
	\frac{\partial}{\partial t} v(t,x)
		=\frac{1}{2}\frac{\partial^2}{\partial x^2} \left(|x|^{2 \alpha} v(t,x) \right), \ &(t,x) \in (0, T] \times (0, \infty)
\end{align*}
for $\alpha \in [\frac{1}{2},1]$ and $T>0$. 
As the law of $X(t)$ in $(1)$, $\mu_t$ solves the above equation in the sense of distribution. 
Moreover, $X(t)$ is a strong solution, by Proposition \ref{proposition iff}. 
Therefore, 
we construct a smooth fundamental solution $v(t,x)$ such that 
\[
v(t,x)= \int_{[0, \infty)} p(t,x,y)f(y)dy
\]
where $p$ is the density of $\mu_t (dy)=p(t,x,y)dy$ obtained 
by using the Malliavin calculus. 
To the best of our knowledge, 
the coefficients of the above Fokker-Planck equation do not satisfy hypotheses in the PDE literature, 
e.g.~Friedman's book \cite[Chapter 1]{friedman2013partial} for obtaining a weak or smooth solution. 
This is investigated in Section \ref{The Fokker-Planck-Kolmogorov equation}.

The next Section \ref{sec:notation} is devoted to the preliminaries and a useful lemma. 
In Section \ref{Holder continuous diffusion coefficients}, 
the stability problem is applied to the general driftless SDEs with H\"older-continuous diffusion coefficients 
with exponents in $[\frac{1}{2},1]$. 
In Section \ref{Nakao-Le Gall}, we introduce the generalized Nakao-Le Gall condition. 
Then we estimate a strong convergence of stability problems. 
In Section \ref{Devil's staircases diffusion}, we apply the results to Cantor diffusion processes.

%
%
%
%
\section{Preliminaries}\label{sec:notation}
In the interval $I=(0,\infty)$, we assume that a Borel function $\sigma: \bR \to \bR$ satisfying
\begin{align*}
 \text{(LI) }\	\text{if $\sigma (x)\neq 0$, then $\sigma^{-2}$ is integrable over a neighborhood of }  x. 
\end{align*}
As a consequence of \cite[Theorem 1]{Engelbert:1985ab}, 
we have a weak solution $X$ on the filtered probability space $(\Omega, \mathscr{F}, \mathscr{F}_t, \bP)$ 
such that 
\begin{align}\label{sde0}
X (t) -X (0) = \int_0^t \sigma (X (s)) dB_s, \ X(0) =x_0 \in I. 
\end{align}
We also define a stopping time $S= \inf \{t \geq 0 : X (t) \notin I \}$. 
Because $X$ satisfies a driftless SDEs, its scale function is 
the identity (see \cite[5.22 Proposition]{Karatzasbook}); 
\[
\bP ( \lim_{t \uparrow S} X (t) =0 ) =\bP ( \sup_{0 \leq t< S} X (t) <\infty )=1.  
\]
Therefore the solution is nonexplosive, 
and we have 
\begin{align}\label{a stopping time}
\bP \left( S=\inf \{t \geq 0 : X (t) =0 \} \right)=1. 
\end{align}
As pathwise uniqueness implies uniqueness in the sense of probability law by the Yamada-Watanabe theorem \cite{YW},  
the following lemma obviously follows from Engelbert and Schmidt \cite{Engelbert:1985ab}. 
However, we provide a proof here. 

\begin{lemma}\label{Lemma: only if}
Let $c$ be in $I$. 
Suppose there exists a solution to equation $(\ref{sde0})$ and $\sigma (0)=0$. 
If pathwise uniqueness holds for $(\ref{sde0})$, 
then it follows that 
\[
\lim_{x \downarrow 0} 
\int_{x}^{c} \frac{1}{(\sigma (y))^2} dy = \infty.
\]
\end{lemma}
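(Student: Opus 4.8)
The plan is to argue by contraposition: assuming that $\int_{x}^{c}(\sigma(y))^{-2}\,dy$ stays bounded as $x\downarrow 0$, I will construct two distinct solutions to $(\ref{sde0})$ started from $x_0$, thereby violating pathwise uniqueness. The natural mechanism is that the origin becomes an \emph{accessible} and \emph{non-sticky} boundary point for the diffusion (in Feller's classification), so that once the process reaches $0$ it may either stay absorbed at $0$ or be instantaneously reflected/restarted; since $\sigma(0)=0$, the constant path $X\equiv 0$ after the hitting time $S$ is always a solution, and the key is to produce a \emph{second} solution that leaves $0$.

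First I would recall from the preamble that the scale function is the identity and, because the SDE is driftless, the speed measure has density $(\sigma(y))^{-2}$ on $I$; the quantity in the lemma, $\lim_{x\downarrow 0}\int_x^c (\sigma(y))^{-2}\,dy$, is precisely the speed measure of a neighborhood of $0$ (equivalently, by monotonicity considerations and the comparison $(c-y)/(\sigma(y))^2 \le c\,(\sigma(y))^{-2}$, it controls the Feller quantity $\mu(0+)$ appearing in Proposition \ref{proposition iff}). Thus the hypothesis of the contrapositive says the speed measure near $0$ is finite, so $0$ is reached in finite time with positive probability and is a regular (instantaneously reflecting–type) boundary rather than a natural one. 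Second, I would invoke the Engelbert--Schmidt theory: under (LI), the SDE $(\ref{sde0})$ admits a weak solution, and after the hitting time $S$ one can \emph{either} stop the solution (giving the absorbed solution, which uses $\sigma(0)=0$ to remain a genuine solution of the SDE) \emph{or} restart it afresh from $0$ using the strong Markov property and the fact that a non-natural boundary allows excursions back into $I$. Concretely, on the event $\{S<\infty\}$ (which has positive probability by finiteness of the speed measure near $0$), glue to the path after time $S$ an independent copy of a weak solution started at $0$; the resulting process still solves $(\ref{sde0})$ but differs on a set of positive probability from the absorbed solution. This yields two solutions with different laws, contradicting pathwise uniqueness (which, by Yamada--Watanabe, forces uniqueness in law).

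The main obstacle is making the ``restart from $0$'' rigorous: one must show that a weak solution of $(\ref{sde0})$ started at $X(0)=0$ genuinely leaves $0$ with positive probability, i.e.\ that $0$ is not a trap, and this is exactly where the finiteness of $\int_{0+}^{c}(\sigma(y))^{-2}\,dy$ (hence accessibility of $0$ from the \emph{inside} translating, by time-reversal/scale-and-speed symmetry, into exitability from $0$) must be used. I would handle this via the explicit time-change representation: write a candidate solution as $W$ a Brownian motion (the natural scale makes $X$ a local martingale, hence a time-changed Brownian motion) run under the clock $A(t)=\int_0^t(\sigma(X(s)))^{-2}\,ds$; finiteness of the speed measure near $0$ guarantees the inverse clock is non-degenerate at $0$, so the Brownian motion started at $0$ immediately enters $(0,\infty)$ and the time change is well defined, producing a solution that does not stay at $0$. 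The remaining steps—checking that the spliced process is adapted to a suitable filtration, is a continuous semimartingale, and satisfies the integral equation—are routine given the strong Markov property and the independence of the pieces, so I would only sketch them.
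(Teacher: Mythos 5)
Your plan is correct and follows essentially the same route as the paper: argue by contraposition, use the Feller test to show that finiteness of $\int_{0+}^{c}(\sigma(y))^{-2}\,dy$ makes $0$ an accessible (exit-and-entrance) boundary reached in finite time, build a nontrivial solution from $0$ by the Engelbert--Schmidt time change of a Brownian motion, and oppose it to the trivial solution $X\equiv 0$ available because $\sigma(0)=0$. The only slip is that your clock should be written in terms of the driving Brownian motion, $A_t=\int_0^t \sigma(|\overline{B}_s|)^{-2}\,ds$ with $X=\overline{B}_{A^{-1}}$, rather than in terms of $X$ itself, which is how the paper does it.
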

\begin{proof}
We shall prove that 
if $\displaystyle \lim_{x \downarrow 0} \int_{x}^{c} \frac{1}{(\sigma (y))^2} dy < \infty$, 
then the pathwise uniqueness is violated. 

By the Feller's test, because
\begin{align*}
\nu (0+) &=\lim_{x \downarrow 0} 
\int_{x}^{c} \frac{y-x}{(\sigma (y))^2} dy 
\leq 2 c \lim_{x \downarrow 0} \int_{x}^{c} \frac{1}{(\sigma (y))^2} dy <\infty , 
\\ \mu (0+) &=\lim_{x \downarrow 0} 
\int_{x}^{c} \frac{c-y}{(\sigma (y))^2} dy 
\leq 2 c \lim_{x \downarrow 0} \int_{x}^{c} \frac{1}{(\sigma (y))^2} dy <\infty, 
\end{align*}
the boundary point $0$ is exit and entrance. 
Moreover, considering the natural scale $\lim_{x \uparrow \infty}  s (x)=\lim_{x \uparrow \infty} (x-c) =\infty$, 
$0$ is reached from an interior point of $I$ in finite time $\bP$-almost surely; that is 
\[
	\bP (S=\inf \{t \geq 0 : X (t) =0 \} <\infty)=1.
\]
Applying the method of time change, 
let us now consider a diffusion process starting from $0$. 
For a one-dimensional $\overline{\mathscr{F}}_t$-Brownian motion $\overline{B}_t$, let
\[
	A_t := \int_0^t \frac{1}{\sigma ( |\overline{B}_s|)^{2}} ds. 
\]
Assuming that $\displaystyle \lim_{x \downarrow 0} \int_{x}^{c} \frac{1}{(\sigma (y))^2} dy < \infty$, 
then $\sigma^{-2}(|\cdot |)$ is locally integrable over $\bR$. 
Thus, it follows that $\bP ( A_t < \infty)=1$ for every $t \geq 0$. 
Moreover, 
let $A_t$ define a continuous additive function of $\overline{B}_t$ such that 
\[
 \bE ( |\overline{B}_{A^{-1}_t}|^2)=\int_{0}^{t} \sigma ( \overline{B}_{A^{-1}_s} )^2 ds. 
 \]
Therefore, for every $t \geq 0$, we have 
\[
	B_t := \int_{0}^{t} \frac{1}{\sigma ( X_s )} d X_s \text{ where }  X_t = \overline{B}_{A^{-1}_s}. 
\]
Thus, $(X_t , B_t)$ satisfies equation $(\ref{sde0})$ with $X (0)=0$ for $t\geq S$. However, 
because $\sigma (0)=0$, $(0 , B_t)$ also satisfies this equation, violating the pathwise uniqueness condition. 
\end{proof}
Conversely, 
as is well-known, the existence of a weak and unique solution in the probability law does not 
imply pathwise uniqueness without some additional condition. 
On the basis of the above proof, we impose an additional condition of the behavior of the boundary $0$ as follows. 
\begin{lemma}\label{Lemma: only if2}
Let $c$ be in $I$. 
Suppose there exists a solution to equation $(\ref{sde0})$ and let $\sigma (0)=0$. 
If pathwise uniqueness holds for $(\ref{sde0})$, 
then 
\[
\mu (0+) =\lim_{x \downarrow 0} \int_{x}^{c} \frac{c-y}{(\sigma (y))^2} dy =\infty. 
\]
\end{lemma}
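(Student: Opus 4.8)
The plan is to prove the contrapositive: assuming $\mu(0+) < \infty$, I will exhibit two distinct solutions of $(\ref{sde0})$ driven by the same Brownian motion, thereby violating pathwise uniqueness. The point is not to redo any delicate analysis but to observe that, after shifting the reference point slightly to the left of $c$, the hypothesis $\mu(0+) < \infty$ already implies the hypothesis used in the proof of Lemma \ref{Lemma: only if}, which then applies essentially verbatim. Note that this is also a genuine sharpening of Lemma \ref{Lemma: only if}, since $\mu(0+)=\infty$ implies $\lim_{x\downarrow 0}\int_x^c\sigma^{-2}\,dy=\infty$ but not conversely, so some reduction is needed.

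Here is the elementary step carrying the reduction. Fix any $c' \in (0, c)$ and put $\kappa := c - c' > 0$. For every $y \in (0, c')$ one has $c - y > \kappa$, hence
\[
\frac{1}{\sigma(y)^2} = \frac{1}{c - y}\cdot\frac{c - y}{\sigma(y)^2} \le \frac{1}{\kappa}\cdot\frac{c - y}{\sigma(y)^2},
\]
and integrating over $(x, c')$, enlarging the domain of integration to $(x, c)$ (the integrand is nonnegative), and letting $x \downarrow 0$ gives
\[
\lim_{x \downarrow 0} \int_x^{c'} \frac{dy}{\sigma(y)^2} \le \frac{1}{\kappa}\,\mu(0+) < \infty .
\]
The analogous comparison, using $y/(c-y) \le c'/\kappa$ on $(0,c')$, also shows that the companion Feller quantity $\nu(0+)$ formed with reference point $c'$ is finite, so $0$ remains accessible. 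The one subtlety is that $c' < c$ must be \emph{strict}: the weight $c - y$ degenerates at $y = c$, so the finiteness of $\mu(0+)$ carries no information about the integrability of $\sigma^{-2}$ near $c$ itself. (If no such $c'$ existed, i.e. $\sigma \equiv 0$ on $(0,c)$, then $\mu(0+) = +\infty$ and there is nothing to prove.)

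With $\lim_{x\downarrow 0}\int_x^{c'}\sigma(y)^{-2}\,dy < \infty$ and $\sigma(0) = 0$ in hand, I would invoke the argument in the proof of Lemma \ref{Lemma: only if} with $c'$ in the role of $c$: Feller's test gives that $0$ is exit and entrance, the time change $X_t = \overline{B}_{A^{-1}_t}$ with $A_t = \int_0^t \sigma(|\overline{B}_s|)^{-2}\,ds$ produces a solution leaving $0$, while the constant path $0$ (valid since $\sigma(0)=0$) is another solution from $0$; splicing these at the first hitting time of $0$ — finite a.s.\ by accessibility — yields two solutions of $(\ref{sde0})$ that coincide up to that time and differ afterwards. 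Hence pathwise uniqueness fails, and by contraposition pathwise uniqueness forces $\mu(0+) = \infty$. There is no real obstacle here beyond this structural observation; the main thing to get right is to localize to a strictly interior reference point instead of attempting the time-change construction directly under the weaker bound $\mu(0+) < \infty$, after which everything is inherited from Lemma \ref{Lemma: only if}.
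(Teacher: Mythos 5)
Your proposal is correct and follows the same route as the paper: argue by contraposition and reduce to the time-change construction already carried out in the proof of Lemma \ref{Lemma: only if}. Where you differ is in making the reduction honest. The paper's own proof consists of the single displayed implication $\mu(0+)=\infty \Rightarrow \lim_{x\downarrow 0}\int_x^c \sigma(y)^{-2}\,dy=\infty$, which is true (since $c-y\le c$) but points the wrong way for the deduction: combined with Lemma \ref{Lemma: only if} it only re-proves the weaker statement, and its contrapositive ($\int\sigma^{-2}<\infty \Rightarrow \mu(0+)<\infty$) is not what is needed either. What is actually required is that $\mu(0+)<\infty$ puts one back in the hypotheses of Lemma \ref{Lemma: only if}, and this fails verbatim because the weight $c-y$ degenerates at $y=c$ and says nothing about integrability of $\sigma^{-2}$ near $c$. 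Your localization to a strictly interior reference point $c'\in(0,c)$, with the bounds $\int_x^{c'}\sigma^{-2}\le \kappa^{-1}\mu(x)$ and $\nu_{c'}(x)\le (c'/\kappa)\mu(x)$, is exactly the missing step, and the Feller classification of the boundary $0$ is insensitive to replacing $c$ by $c'$, so the remainder of the argument is genuinely inherited. In short: same construction as the paper, but your write-up supplies (and correctly identifies as necessary, given that $\mu(0+)=\infty$ is strictly stronger than non-integrability of $\sigma^{-2}$ at $0$) the one nontrivial logical step that the paper's two-line proof elides.
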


\begin{proof}
Following the proof of Lemma \ref{Lemma: only if}, we have 
\[
\mu (0+) =\lim_{x \downarrow 0} \int_{x}^{c} \frac{c-y}{(\sigma (y))^2} dy =\infty \Rightarrow 
\int_{x}^{c} \frac{1}{(\sigma (y))^2} dy = \infty. 
\]
Thus, we obtain the desired result. 
\end{proof}

\begin{remark}\label{remark: Feller}
By the Feller test (see \cite[Chapter 6]{handbookzbMATH01817636}), for $c\in (0,1)$ and $x \in (0, c)\subset I$, we define 
	\begin{align*}
	\nu (x) = \int_{x}^{c} \frac{y-x}{(\sigma (y))^2} dy \text{ and } \mu (x) =\int_{x}^{c} \frac{c-y}{(\sigma (y))^2} dy.
	\end{align*}
\begin{enumerate}
\item If $0$ is natural; that is 
	\begin{align*}
	\nu (0+) =\lim_{x \downarrow 0} \nu(x) = \infty \text{ and } \mu (0+) =\lim_{x \downarrow 0} \int_{x}^{c}\mu (x) =\infty,
	\end{align*}
	then we have $\bP (S<\infty)=0$ and $\bP ( X (t) \in I  \text{ for }  t\geq 0 )=1$. 
\item If $0$ is an exit and no-entry point, i.e., $\nu (0+) <\infty$ and $\mu (0+)= \infty$, 
	then $0<\bP (S< \infty)\leq1$ and 
	\[
	\bP (X_t = 0 \text{ for every } t \geq S)>0.
	\]
	However, this process cannot be started from the boundary point.  
\item If $\nu (0+) =\infty$ and $\mu (0+)< \infty$, then it implies 
	\[
	\infty=\lim_{x \downarrow 0} \int_{x}^{c} \frac{y}{(\sigma (y))^2} dy \leq \lim_{x \downarrow 0} \int_{x}^{c} \frac{1}{(\sigma (y))^2} dy, 
	\]
	is implied. Moreover, 
	\[
	 \lim_{x \downarrow 0} \frac{ \displaystyle \int_{x}^{c} \frac{1}{(\sigma (y))^2}dy }{ \displaystyle \int_{x}^{c} \frac{y}{(\sigma (y))^2}dy }= \lim_{x \downarrow 0}\frac{1}{x}=\infty, 
	\]
	which is inconsistent with $\mu (0+)< \infty$. Therefore, under this condition, 
	$0$ is not an entry and nonexit point 
\item  If $0$ is both an exit and an entry point, i.e., 
	$\nu (0+) <\infty$ and $\mu (0+)< \infty$, it is called a nonsingular point. 
	As the scale function $\lim_{x \uparrow \infty}  s (x)=\infty$, we conclude that $\bP (S< \infty)=1$. 
	Moreover, diffusion can begin from a nonsingular boundary. 
\end{enumerate}
\end{remark}

\begin{lemma}\label{lemma if}
Let $c$ be in $I$ and $\sigma$ be a real-valued Borel function that monotonically increase or decrease in $I$ 
such that $(\sigma (x))^2>0$ for every $x \in I$. 
%
Then, pathwise uniqueness holds for $(\ref{sde0})$ if 
\begin{align*}
\mu (0+) &=\lim_{x \downarrow 0} 
\int_{x}^{c} \frac{c-y}{(\sigma (y))^2} dy =\infty. 
\end{align*}
\end{lemma}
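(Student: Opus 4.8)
The plan is to treat the interior of $I$ and the boundary point $0$ separately --- local-time techniques inside $I$, the Feller test at $0$. Let $(X^{1},B)$ and $(X^{2},B)$ be two solutions of $(\ref{sde0})$ on one stochastic basis, sharing the driving Brownian motion, with $X^{1}(0)=X^{2}(0)=x_{0}\in I$, and put $S^{i}=\inf\{t\ge0:X^{i}(t)\notin I\}$. Since $\sigma$ is monotone and nonvanishing on $I$ it has a single sign there, and after replacing $(\sigma,B)$ by $(-\sigma,-B)$ if necessary --- an operation preserving $(\ref{sde0})$ and its pathwise uniqueness --- we may assume $\sigma>0$ on $I$; the hypothesis then forces $\sigma$ to be nondecreasing with $\sigma(0+)=0$ (a positive decreasing $\sigma$, or a positive increasing $\sigma$ with $\sigma(0+)>0$, would make $\mu(0+)$ finite). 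On a compact $[a,b]\subset I$ the monotone $\sigma$ is of bounded variation, bounded, and bounded below by $\sigma(a)>0$, so with $M=\sup_{[a,b]}\sigma$ one has $(\sigma(x)-\sigma(y))^{2}\le 2M\,|\sigma(x)-\sigma(y)|=|L(x)-L(y)|$ for the nondecreasing $L=2M\sigma$; thus the Nakao--Le Gall condition for pathwise uniqueness holds on $[a,b]$, the accompanying condition at the zeros of $\sigma$ being vacuous since $\sigma$ does not vanish on $[a,b]$. By the standard localisation over an exhausting sequence of compacts $[a_{k},b_{k}]\uparrow I$, we obtain $S^{1}=S^{2}=:\tau$ and $X^{1}=X^{2}$ on $[0,\tau)$, hence on $[0,\tau]$ by continuity.

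It remains to match the solutions on $[\tau,\infty)$. By Remark \ref{remark: Feller}, the hypothesis $\mu(0+)=\infty$ leaves only case (1), in which $0$ is natural, and case (2), in which $0$ is an exit and no-entrance point. In case (1), $\tau=\infty$ $\bP$-a.s.\ and we are done. In case (2), $X^{1}(\tau)=X^{2}(\tau)=0$ by $(\ref{a stopping time})$, and by the strong Markov property $X^{i}(\tau+\cdot)$ is again a solution of $(\ref{sde0})$ started at $0$; running the time-change representation of Lemma \ref{Lemma: only if} in the present regime, where $\mu(0+)=\infty$ gives $\lim_{x\downarrow0}\int_{x}^{c}\sigma(y)^{-2}\,dy=\infty$ (the implication isolated in the proof of Lemma \ref{Lemma: only if2}), the additive functional driving that time change equals $+\infty$ for every positive time, so its inverse is identically $0$ and $X^{i}(\tau+t)=0$ for all $t\ge0$, $i=1,2$. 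In both cases $X^{1}=X^{2}$ on $[0,\infty)$, which is the claim.

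The interior step is where the real work lies. Without a H\"older or Lipschitz modulus the Yamada--Watanabe comparison function used elsewhere in the paper is unavailable, and for a merely continuous positive diffusion coefficient pathwise uniqueness can genuinely fail (Barlow's example); it is monotonicity, through bounded variation on compacts of $I$, that rescues the argument. The subtle part is the local-time estimate: by Tanaka's formula and the occupation-times formula the local time at $0$ of $X^{1}-X^{2}$ must be dominated using the nondecreasing majorant $L$ and shown to vanish, whence $|X^{1}-X^{2}|$ is a nonnegative continuous local martingale started at $0$ and therefore identically $0$; and this has to be carried out after localising away from the boundary, where $\sigma$ degenerates. The boundary step, by contrast, is soft --- it is the exact converse of the construction in Lemma \ref{Lemma: only if}, the point being that $\mu(0+)=\infty$ is precisely the condition rendering $0$ absorbing (or inaccessible) rather than a two-sided regular point, so that two solutions meeting at $0$ have no choice but to coincide thereafter.
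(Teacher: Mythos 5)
Your proof is correct and follows essentially the same route as the paper's: both arguments localize away from the degenerate boundary point $0$, observe that monotonicity gives bounded variation and hence the Nakao--Le Gall condition on regions where $\sigma$ is bounded below, and conclude pathwise uniqueness up to the exit time $S$. The paper makes your ``standard localisation'' explicit by pasting a strong solution of a modified equation (with coefficient $\sigma_n=\sigma 1_{(l_n,\infty)}+\tfrac{1}{n}1_{[0,l_n]}$, to which Nakao's theorem applies) onto $X$ at the exit time from $I_n=(l_n,\infty)$ and letting $n\to\infty$, and it stops at $S$, so your additional analysis of the behaviour after $\tau$ is extra rather than required.
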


\begin{example}
According to this expression, $\mu (0+) <\infty$ generally does not imply pathwise uniqueness. 
To show this, we consider Tanaka's example, in which 
$\sigma(x)=1_{\{ x>0\}} (x) -1_{\{ x\leq 0\}} (x) $ for $x \in \bR$ is monotone increasing but 
$\mu (0+) =c-\frac{1}{2}c^2 <\infty$. 
\end{example}

\begin{example}
Suppose that $\sigma$ is a H\"older-continuous function with exponents $\alpha \in [0,1)$. 
In Girsanov's example \cite{GirsanovzbMATH03196595}, 
$\sigma(x)=\frac{|x|^\alpha}{1+|x|^\alpha} $ for $x \in \bR$ for $0 \leq \alpha < \frac{1}{2}$ implies that $\mu (0+) <\infty$.
Moreover, the Yamada-Watanabe theorem \cite[Theorem 1]{YW}, 
gives $\mu (0+) =\infty$ if $\sigma$ is H\"older-continuous of order $\frac{1}{2}$, i.e., $\alpha \geq \frac{1}{2}$. 
\end{example}

\begin{remark}
Bass, Burdzy and Chen \cite{Bass07AOP} recently showed that 
if the solution is instantaneous at $0$, i.e., $\int_0^\infty 1_{ \{ 0\}} (X(s)) ds=0$, where $\sigma(x)=|x|^\alpha$ for $0 < \alpha < \frac{1}{2}$, then pathwise uniqueness holds. 
\end{remark}

\begin{proof}[Proof of Lemma \ref{lemma if}]
\begin{subequations}
Denoting the solutions of the SDE $(\ref{sde0})$ by $X$, 
for every $t \geq 0$, we have 
\begin{align}\label{sde0+ln}
X (t) -X (0) = \int_0^t \sigma (X (s)) dB_s, \ X(0) =x_0. 
\end{align}
It is sufficient to prove that pathwise uniqueness holds up to time $S (\leq \infty)$, defined as $(\ref{a stopping time})$. 

Now we define 
\begin{align*}
I_n =(l_n, \infty) \text{ and }  
l_n := \inf\left\{ x \in I : \sigma (x) > \frac{1}{n} \right\}. 
\end{align*}
Under the assumption $(\sigma (x))^2>0$ for every $x \in I$, 
there exists a $n_0 \in \bN$ such that 
$X(0)=x_0 \in I_n$ for every $n \geq n_0 \in \bN$. 
Set 
\begin{align*}
S_n =  \inf\left\{ t \geq 0  :  X(t) \notin  I_n \right\}. 
\end{align*}
Then, as $l_n$ is a nonsingular point, we can write 
\[
\bP ( S_n < \infty )=1.
\]
For $n \in \bN$, we consider the following SDE 
for every $t\geq 0$, 
\begin{align}\label{sde0+ln1}
X_n (t) -X_n (0) = \int_0^t \sigma_n (X_n (s)) dB_s, \ X_n (0) =x_0. 
\end{align}
where $\sigma_n$ satisfies that for every $x \in \bR$, $\sigma_n (x) = \sigma (-x)$ and 
\[
 \sigma_n (x) :=\sigma (x) 1_{(l_n, \infty)} (x)  +\frac{1}{n} 1_{[0, l_n ]} (x) . 
\]
Here $\sigma_n$ is a Borel function such that $ \sigma_n (x)  \geq \frac{1}{n} $ for $x \in I$. 
Therefore, $ \sigma_n^{-2}$ is integrable over a neighborhood of every $x \in \bR$ , 
then there exists a unique solution in the sense of the probability law, see \cite{Engelbert:1985aa}. 
Moreover, because $\sigma$ is monotonically increasing or decreasing in $I$, 
the variation of $\sigma_n$ is finite, see \cite[(4.2) Proposition]{RevuzYor1999}. 
Moreover, $ \sigma_n (x)  \geq \frac{1}{n} $ for $x \in I$. 
Thus, by Nakao's result \cite{N1972},  pathwise uniqueness holds for $(\ref{sde0+ln1})$. 
We conclude that $X_n$ is unique and strong solution by the Yamada-Watanabe theorem \cite{YW}. 

Let us now consider a strong solution $X_n^S$ starting from $l_n$, 
\begin{align}\label{sde0+ln1+ln}
X_n^{S} (u) -X_n^{S} (0) = \int_0^u \sigma_n (X_n^S (s)) dW_s, \ X_n^S (0) =l_n. 
\end{align}
For an almost surely finite stopping time $S_n$, 
we define a Brownian motion independent of $\mathscr{F}_{S_n}$ such that 
$W_s := B_{s+S_n}-B_{S_n}$ for $s \geq 0$, see \cite[(6.15) Theorem in page 86]{Karatzasbook}. 

For every $t \geq S_n$, 
$W_{t-S_n} = B(t)-B(S_n)$ is $\mathscr{F}_{t}$-measurable. 
It follows that $\mathscr{F}_{t-S_n}^W \subset \mathscr{F}_{t}$ for every $t \geq S_n$. 
Furthermore, 
since $X_n^S$ is a strong solution with respect to $\mathscr{F}^{W}$, 
$X_n^{S} (t-S_n)$ of $(\ref{sde0+ln1+ln})$ is $\mathscr{F}_{t}^W$-measurable. 
We then conclude that 
$X_n^{S} (t-S_n)$ of $(\ref{sde0+ln1+ln})$ is also $\mathscr{F}_{t}$-measurable. 

Therefore, 
defining 
\[
Z_n (t) := X(t) 1_{\{ t < S_n \}} + X_n^S (t-S_n) 1_{\{ t \geq S_n \}}, 
\]
we find that $Z_n$ is a solution of $(\ref{sde0+ln1})$. 

On the other hand, again noting that pathwise uniqueness holds for $(\ref{sde0+ln1})$, 
if $X$ and $\overline{X}$ be solutions of the SDE $(\ref{sde0})$, then 
\[
\bP ( Z_n (t) = \overline{Z}_n (t)  \text{ for every } t \geq 0 )=1, 
\]
similarly, we define
$\overline{Z}_n:=\overline{X}(t) 1_{\{ t < \overline{S}_n \}} + X_n^{\overline{S}} (t-\overline{S}_n) 1_{\{ t \geq \overline{S}_n \}}$ 
and $\overline{S}_n = \inf\left\{ t \geq 0 :  \overline{X} (t) \notin I_n \right\}$. 
Therefore, 
we have 
\[
\bP ( X(t) = \overline{X}(t)  \text{ for every } t \leq S_n \wedge \overline{S}_n )=1.
\]
Moreover, since $S(\omega) =\lim_{n \to \infty}S_n(\omega) =\lim_{n \to \infty}\overline{S}_n(\omega)$ for almost every $\omega \in \Omega$, 
we conclude that 
\[
\bP ( X(t) = \overline{X}(t)  \text{ for every } t \leq S )=1, 
\] 
which is the desired result. 
\end{subequations}
\end{proof}

Therefore, we obtain Proposition \ref{proposition iff}. 
\begin{proof}[Proof of Proposition \ref{proposition iff}]
By Lemma \ref{Lemma: only if} and \ref{lemma if}, we obtain the result. 
\end{proof}

To investigate stability problems, 
let $X_n$ be the solution to the following equation, 
where $\sigma_n$ are Borel functions with $n \in \bN$: 
\begin{align}\label{sde1n}
X_n (t) -  X_n (0) = \int_{0}^t \SDEa_n ( X_n (s) ) dB_s, \ X_n (0) =x_0 \in I  \text{ and } \sigma_n (0) =\sigma (0) . 
\end{align}
For every $t \geq 0$, we write 
\begin{align*}
\partYt = X (t) - X_n (t) . 
\end{align*} 
Recall that for any $n \in \bN$, $\partY$ is a continuous martingale 
with a quadratic variation of $\langle \partY \rangle$. 
We also define $L_t^x (\partY)$, the continuous version of the local time 
$L_t^x (\partY )$ of $\partY$ accumulated at $x$ until time $t$  (cf.~\cite[page 391, (3.5) Theorem]{RevuzYor1999}). 

The following lemma is useful for estimating 
the convergence rate. 
%
%
\begin{lemma}\label{main claim}
Let $M$ be a local continuous martingale with $\bP (M(0)=0)=1$
and let $\rho$ be a monotonic, strictly increase function on $[0, 1]$ with $\rho (0)=0$. 
For any sequences $\{ \Yp_n \}_{n \in \bN} , \ \{\Yq_n \}_{n \in \bN}$ satisfying $0< \Yp_n \leq \Yq_n \leq 1 \ (n \in \bN)$, 
we define a positive sequence $\{ c_n \}_{n \in \bN}$, 
\[
\Yr_n := \int_{\Yp_n}^{\Yq_n}  \frac{dy}{\rho (y)}. 
\]
Then, 
for every $t \geq 0$ and $n \in \bN$, we have 
\begin{align*}
\bE (|M(t) |) -\Yq_n
	\leq \bE \left(\frac{1}{2 \Yr_n} \int_0^{t} \frac{d \langle M \rangle_s }{\rho (|M (s) |)} 1_{ (\Yp_n, \Yq_n)} (|M (s)|)   \right)
	\leq \bE (|M(t) |).
\end{align*}
\end{lemma}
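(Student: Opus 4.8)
The plan is to apply the occupation-times formula (the Revuz–Yor version of the Itô–Tanaka local-time identity) to rewrite the middle quantity as an integral of the local time of $M$, and then bound the local time from above and below by $\bE(|M(t)|)$ via the Tanaka formula. Concretely, for a fixed $n$ write $\phi(y) := \rho(y)^{-1} 1_{(\Yp_n,\Yq_n)}(y)$ for $y\ge 0$. By the occupation-times formula applied to the (nonnegative) process $|M|$, or equivalently to $M$ after symmetrizing, we have
\begin{align*}
\int_0^t \frac{d\langle M\rangle_s}{\rho(|M(s)|)}\,1_{(\Yp_n,\Yq_n)}(|M(s)|)
= \int_0^t \phi(|M(s)|)\,d\langle M\rangle_s
= \int_{\bR} \phi(|x|)\,L_t^x(M)\,dx,
\end{align*}
where $L_t^x(M)$ is the (right-continuous in $x$) semimartingale local time of $M$. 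Since $\phi(|x|)$ is supported on $\{\Yp_n<|x|<\Yq_n\}$ and there equals $\rho(|x|)^{-1}$, the right-hand side is $\int_{\Yp_n<|x|<\Yq_n}\rho(|x|)^{-1}L_t^x(M)\,dx$.

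Next I would sandwich $L_t^x(M)$. The key structural fact is that for a fixed $t$, the map $x\mapsto L_t^x(M)$ need not be monotone, but the Tanaka–Meyer formula gives, for every $x$,
\begin{align*}
L_t^x(M) = |M(t)-x| - |x| - \int_0^t \mathrm{sgn}(M(s)-x)\,dM(s),
\end{align*}
and taking expectations (the stochastic integral is a true martingale after a localization argument, using that $|\mathrm{sgn}|\le 1$ and $M(0)=0$) yields $\bE(L_t^x(M)) = \bE(|M(t)-x|) - |x| \le \bE(|M(t)|)$ for all $x$, and also $\bE(L_t^x(M)) \ge \bE(|M(t)|) - 2|x| \ge \bE(|M(t)|) - 2\Yq_n$ whenever $|x|\le \Yq_n$ (using $\bE|M(t)-x|\ge \bE|M(t)| - |x|$ and $-|x|\ge -\Yq_n$). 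Taking the $\bE$ inside the $dx$-integral by Tonelli (everything nonnegative) and integrating the bound $\bE(\bE(L_t^x(M)))$ over $\{\Yp_n<|x|<\Yq_n\}$ against the weight $\rho(|x|)^{-1}$ gives
\begin{align*}
\bE\!\left(\int_0^t \frac{d\langle M\rangle_s}{\rho(|M(s)|)}\,1_{(\Yp_n,\Yq_n)}(|M(s)|)\right)
= \int_{\Yp_n<|x|<\Yq_n}\frac{\bE(L_t^x(M))}{\rho(|x|)}\,dx,
\end{align*}
and the upper bound $\bE(|M(t)|)\,\int_{\Yp_n<|x|<\Yq_n}\rho(|x|)^{-1}dx$, the lower bound $(\bE(|M(t)|)-2\Yq_n)\,\int_{\Yp_n<|x|<\Yq_n}\rho(|x|)^{-1}dx$. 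Wait — I should double the integral since $\{\Yp_n<|x|<\Yq_n\}$ is two intervals; by symmetry of $\phi(|\cdot|)$, $\int_{\Yp_n<|x|<\Yq_n}\rho(|x|)^{-1}dx = 2\int_{\Yp_n}^{\Yq_n}\rho(y)^{-1}dy = 2\Yr_n$. Rescaling by the $\tfrac{1}{2\Yr_n}$ prefactor then produces exactly $\bE(|M(t)|) - 2\Yq_n \le \text{(middle)} \le \bE(|M(t)|)$; if the intended statement has $-\Yq_n$ rather than $-2\Yq_n$ one refines the lower Tanaka bound using $L_t^x$ for $x>0$ only (where $\bE|M(t)-x|\ge \bE(M(t))^+ \ge \bE|M(t)| - x$ still loses a factor, so more care — see below).

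I expect the main obstacle to be twofold. First, the integrability/martingale justification: $M$ is only a \emph{local} continuous martingale, so $\langle M\rangle_t$ and $L_t^x(M)$ may not be integrable, and the stochastic integral $\int_0^t \mathrm{sgn}(M(s)-x)\,dM(s)$ need not be a true martingale; one must run the argument along a localizing sequence of stopping times $T_k\uparrow\infty$, prove the identity and the sandwich for $M^{T_k}$, and pass to the limit using monotone convergence on the nonnegative quantities (local time and the occupation integral both increase to their limits as $k\to\infty$), noting that $\bE(|M^{T_k}(t)|)\le \bE(|M(t)|)$ is \emph{not} automatic — actually $|M|$ is a submartingale so $\bE(|M^{T_k}(t)|)=\bE(|M(t\wedge T_k)|)\uparrow$ something $\le\bE(|M(t)|)$ by Fatou, which is the direction we want for the upper bound but needs the reverse for the lower bound, handled by $\bE(|M(t\wedge T_k)|)\to\bE(|M(t)|)$ when $\bE(|M(t)|)<\infty$ (if it is infinite both bounds are trivial). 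Second, pinning down the exact constant ($\Yq_n$ vs $2\Yq_n$) in the lower bound: this comes from how sharply one estimates $\bE(|M(t)-x| - |x|)$ from below on the relevant range of $x$, and whether one symmetrizes $M$ or works with the one-sided local time of $|M|$ directly via the occupation formula for $|M|$ (whose local time at $a>0$ is $L_t^a(M)+L_t^{-a}(M)$); I would choose whichever bookkeeping makes the stated $-\Yq_n$ come out cleanly, most likely by writing the middle integral as $\int_0^t \rho(|M(s)|)^{-1}1_{(\Yp_n,\Yq_n)}(|M(s)|)\,d\langle|M|\rangle_s$ and using the occupation formula for the submartingale $|M|$ together with $\bE(L_t^a(|M|)) = \bE(|M(t)|\wedge \text{stuff})$-type identities, then the factor is exactly right.
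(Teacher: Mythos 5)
Your route is genuinely different from the paper's and it does work, but one step you leave open needs to be closed. The paper never mentions local time here: it builds the Yamada--Watanabe test function $\varphi_n(x)=1_{[\Yp_n,\infty)}(|x|)\int_{\Yp_n}^{|x|}\int_{\Yp_n}^{y}\frac{1}{\Yr_n\rho(z)}1_{(\Yp_n,\Yq_n)}(z)\,dz\,dy$, checks $|x|-\Yq_n\leq\varphi_n(x)\leq|x|$, $|\varphi_n'|\leq 1$, $\varphi_n''(x)=\frac{1}{\Yr_n\rho(|x|)}1_{(\Yp_n,\Yq_n)}(|x|)$, and applies It\^o's formula, so the sandwich falls out of the pointwise bounds on $\varphi_n$. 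Your occupation-times/Tanaka argument is the ``integrated'' form of the same computation (integrating the Meyer--Tanaka formula against $\tfrac12\varphi_n''(x)\,dx$ recovers It\^o for $\varphi_n$), and your localization discussion is actually more careful than the paper's one-line claim that the stochastic integral is a martingale. The loose end is the constant in the lower bound: the pointwise estimate $\bE(L_t^x)\geq\bE|M(t)|-2|x|$ only yields $-2\Yq_n$, and your suggested fix ``use $L_t^x$ for $x>0$ only'' cannot work because the occupation formula forces you to integrate over both signs of $x$. The correct repair is the one you gesture at in your last sentence: pair $x$ with $-x$. Since
\begin{align*}
\bE\bigl(L_t^{x}(M)+L_t^{-x}(M)\bigr)=\bE\bigl(|M(t)-x|+|M(t)+x|\bigr)-2|x|=2\,\bE\bigl(\max(|M(t)|,|x|)\bigr)-2|x|\geq 2\,\bE(|M(t)|)-2\Yq_n
\end{align*}
for $|x|\leq\Yq_n$, and the weight $\rho(|x|)^{-1}1_{(\Yp_n,\Yq_n)}(|x|)$ is even in $x$ with total mass $2\Yr_n$, the weighted average is at least $\bE(|M(t)|)-\Yq_n$, which is exactly the stated bound. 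With that identity written out (and the localization passage to the limit you already describe), your proof is complete; the paper's choice of $\varphi_n$ simply encodes this symmetrization automatically.
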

\begin{proof}
Let 
$\varphi_{n} $ be a symmetric function around the origin such that 
for all $x\in \bR$, 
\begin{align*}
\varphi_{n} (x) &= 1_{[\Yp_n, \infty) } (x)\int_{\Yp_n}^{|x|}
	\int_{\Yp_n}^{y}  \frac{1}{c_n \rho (z)  } 1_{ (\Yp_n, \Yq_n)} (z) dz\, dy.
\end{align*}
%
Thus, for all $x \in \bR$ with $|x| > \Yp_n$, we have 
\begin{align*}
\varphi_{n} (x) &\leq  \int_{\Yp_n}^{|x|}  dy = |x| -  \Yp_n  < | x|, 
\end{align*}
and for every $x \in \bR$ such that $|x| > \Yq_n$, we have 
\[
\varphi_{n} (x) \geq  \int_{\Yq_n}^{|x|}  \int_{\Yp_n}^{y}  \frac{1}{c_n \rho (z)  } 1_{ (\Yp_n, \Yq_n)} (z) dz dy =|x| - \Yq_n. 
\]
That is, for all $x \in \bR$
\begin{align*}
& |x| - \Yq_n \leq \varphi_{n}  (x) \leq |x|.
\end{align*}
Moreover, $\varphi'_{n}$ and $\varphi''_{n}$ are Lebesgue-almost for every $x \in \bR$.  Thus, we can write 
\begin{align*}
		| \varphi'_{n}   (x) | \leq 1  \text{ and }  
		\varphi''_{n} (x)  = \frac{1}{c_n \rho_n (|x|)} 1_{(\Yp_n, \Yq_n) } (|x|) .
\end{align*}
Applying 
It\^o's  formula to $\varphi_{n} \left(M ( \cdot ) \right) $ (see \cite[7.3 Problem]{Karatzasbook}), 
we obtain the $\bP$-almost surely as 
\begin{align*}
\varphi_{n}  (M (t))   - \varphi_{n}  (M (0))  
&=\int_0^t \varphi'_{n}  (M (s)) dM_s  + \frac{1}{2} \int_0^t \varphi''_{n}  (M (s)) d \langle M \rangle_s  
\\&= \int_0^t \varphi'_{n}  (M (s)) dM_s+ \frac{1}{2\Yr_n} \int_0^{t} \frac{d \langle M \rangle_s }{\rho (|M (s) |)} 1_{ (\Yp_n, \Yq_n)}(|M (s)|).  
\end{align*}
As $ \int_0^t \varphi'_{n}  (M (s)) dM (s) $ is a martingale vanishing at $0$ for every $n \in \bN$, 
\begin{align*}
\bE \left( \varphi_{n}  (M (t))   - \varphi_{n}  (M (0))  \right)
= \frac{1}{2 \Yr_n}\bE \left( \int_0^{t} \frac{d \langle M \rangle_s }{\rho (|M (s) |)} 1_{ (\Yp_n, \Yq_n)} (|M (s)|)
 	\right). 
\end{align*}
Moreover, 
$|\varphi'_{n}(x) | \leq 1 $ for $x \in \bR$, 
$|M (t) | - \Yq_n \leq \varphi_{n}  (M (t)) \leq |M (t)|$ and $\varphi_{n}  (M (0)) = 0$ are $\bP$-almost surely for every $t \geq 0$ and $n \in \bN$. 
Thus, we obtain the desired result. 
\end{proof}

\section{H\"older-Continuous Diffusion Coefficients}\label{Holder continuous diffusion coefficients}
In this section, 
we consider stability problems in the case H\"older-continuous diffusion coefficients. 
Given real-valued Borel functions $\sigma$ and $\sigma_n$ with $n \in \bN$, 
let $X$ be a solution of $(\ref{sde0})$ and $X_n$ be a solution of $(\ref{sde1n})$. 
For every $n \in \bN$, we define 
\begin{align}\label{def:delta}
 \Delta_n := \sup_{x \in \bD} | \sigma (x) - \sigma_n (x) | < \infty. 
\end{align}
The convergence rate is obtained as follows cf.~\cite{KY}. 
\begin{lemma}
\label{lemma for classical stability problems}
Suppose that the assumption of Theorem \ref{theorem for classical stability problems in sup norm} holds. 
Then, there exists a $C_1 (t,\Holderindex)>0$ dependent on $\Holderindex$ and $t$ such that 
for every $t \geq 0$ and $n \in \bN$ satisfying $\Delta_n <2^{-\Holderindex}$, 
\begin{align*}
\bE (| X(t) -X_n (t)  |)\leq C_1(t, \Holderindex) \times
	\begin{cases}
	\displaystyle  \left( - \log  \Delta_n  \right)^{-1} \ & \text{ if } \Holderindex=\frac{1}{2},  \\
	\displaystyle  \Delta_n^{2 -(1/\Holderindex) }
	& \text{ if }  \Holderindex \in (\frac{1}{2},1]. 
	\end{cases}
\end{align*}
\end{lemma}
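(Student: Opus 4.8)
The plan is to apply Lemma~\ref{main claim} with $M = Y_n = X - X_n$, using the Yamada--Watanabe-type weight $\rho(y) = y^{2\Holderindex}$ (or a suitable multiple of it), and then to control the resulting time-integral term by the difference of the diffusion coefficients. First I would note that $Y_n$ is a continuous martingale vanishing at $0$, with quadratic variation $d\langle Y_n\rangle_s = (\sigma(X(s)) - \sigma_n(X_n(s)))^2\,ds$. Splitting via the triangle inequality, $|\sigma(X(s)) - \sigma_n(X_n(s))| \leq |\sigma(X(s)) - \sigma(X_n(s))| + |\sigma(X_n(s)) - \sigma_n(X_n(s))| \leq c_{\Holderindex}|Y_n(s)|^{\Holderindex} + \Delta_n$ whenever $|Y_n(s)| \leq 1$, so on the event $|Y_n(s)| \in (\Yp_n, \Yq_n)$ with $\Yq_n \leq 1$ we get $d\langle Y_n\rangle_s \leq 2(c_{\Holderindex}^2 |Y_n(s)|^{2\Holderindex} + \Delta_n^2)\,ds$. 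Dividing by $\rho(|Y_n(s)|) = |Y_n(s)|^{2\Holderindex}$, the first piece contributes a bounded $2c_{\Holderindex}^2 t$ and the second contributes $2\Delta_n^2 \Yp_n^{-2\Holderindex} t$.

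Next I would plug this into the upper bound of Lemma~\ref{main claim}: $\bE(|Y_n(t)|) \leq \Yq_n + \frac{1}{2\Yr_n}\,\bE\!\left(\int_0^t \frac{d\langle Y_n\rangle_s}{\rho(|Y_n(s)|)}1_{(\Yp_n,\Yq_n)}(|Y_n(s)|)\right) \leq \Yq_n + \frac{t}{\Yr_n}(c_{\Holderindex}^2 + \Delta_n^2 \Yp_n^{-2\Holderindex})$. The remaining task is pure calculus: choose $\Yp_n$, $\Yq_n$ as functions of $\Delta_n$ to balance the three terms. For $\Holderindex \in (\tfrac12, 1]$ one computes $\Yr_n = \int_{\Yp_n}^{\Yq_n} y^{-2\Holderindex}\,dy \asymp \Yp_n^{1-2\Holderindex}$ (since $1 - 2\Holderindex < 0$), so $\frac{1}{\Yr_n} \asymp \Yp_n^{2\Holderindex - 1}$; taking $\Yq_n$ a fixed small constant and $\Yp_n = \Delta_n$ makes the $\Delta_n^2\Yp_n^{-2\Holderindex}/\Yr_n$ term of order $\Delta_n^{2-1/\Holderindex}$ after re-optimizing — more precisely one sets $\Yp_n = \Delta_n^{1/\Holderindex}$ so that $\Delta_n^2 \Yp_n^{-2\Holderindex} = 1$ and then $\frac{1}{\Yr_n} \asymp \Yp_n^{2\Holderindex-1} = \Delta_n^{(2\Holderindex-1)/\Holderindex} = \Delta_n^{2 - 1/\Holderindex}$, while also choosing $\Yq_n \asymp \Delta_n^{2-1/\Holderindex}$ (valid since $2 - 1/\Holderindex > 0$ for $\Holderindex > \tfrac12$) so all contributions are $O(\Delta_n^{2-1/\Holderindex})$. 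The condition $\Delta_n < 2^{-\Holderindex}$ is exactly what guarantees $\Yp_n = \Delta_n^{1/\Holderindex} < 2^{-1} < \Yq_n$-type orderings and $|Y_n(s)| \leq 1$ can be arranged. For $\Holderindex = \tfrac12$, $\Yr_n = \int_{\Yp_n}^{\Yq_n} y^{-1}\,dy = \log(\Yq_n/\Yp_n)$; choosing $\Yp_n = \Delta_n^2$ gives $\Delta_n^2\Yp_n^{-1} = 1$, $\Yq_n$ a constant, and $\Yr_n \asymp \log(1/\Delta_n^2) \asymp -\log\Delta_n$, whence $\bE(|Y_n(t)|) \lesssim (-\log\Delta_n)^{-1} + \text{const}$; absorbing the constant into the statement (valid for $\Delta_n$ small, i.e.\ $n$ large, after possibly shrinking $\Yq_n$ too) yields the $(-\log\Delta_n)^{-1}$ rate.

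The main obstacle I anticipate is not any single estimate but making the balancing of $\Yp_n, \Yq_n$ fully rigorous while keeping $\Yq_n$ genuinely bounded by $1$ and $\Yp_n < \Yq_n$ across the whole stated range, and in particular handling the "constant" term $c_{\Holderindex}^2 t / \Yr_n$: for $\Holderindex = \tfrac12$ this term is $O((-\log\Delta_n)^{-1})$ which is fine, but for $\Holderindex \in (\tfrac12,1]$ with the above choice it is $O(\Delta_n^{2-1/\Holderindex})$, also fine — one must just check the exponents line up, i.e.\ that $(2\Holderindex - 1)/\Holderindex = 2 - 1/\Holderindex$ matches the claimed exponent, which it does. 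A secondary technical point is that Lemma~\ref{main claim} is stated for $\rho$ strictly increasing on $[0,1]$ with $\rho(0)=0$; $\rho(y)=y^{2\Holderindex}$ qualifies for $\Holderindex>0$, and the constant $C_1(t,\Holderindex)$ then collects $c_{\Holderindex}$, the numerical factors, and the linear-in-$t$ dependence. I would close by remarking that this lemma is the engine behind Theorem~\ref{theorem for classical stability problems in sup norm}, obtained from it via a Burkholder--Davis--Gundy / Doob maximal-inequality argument to pass from $\bE(|X(t)-X_n(t)|)$ to $\bE(\sup_{t\leq T}|X(t)-X_n(t)|)^2$.
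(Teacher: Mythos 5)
Your proposal follows essentially the same route as the paper: apply Lemma~\ref{main claim} to $Y_n$ with $\rho(y)=y^{2\Holderindex}$, split $\sigma(X)-\sigma_n(X_n)$ into a H\"older part and a $\Delta_n$ part so that the integrand is bounded by $2t\bigl(c_{\Holderindex}^2+(\Delta_n/\Yp_n^{\Holderindex})^2\bigr)$, and set $\Yp_n=\Delta_n^{1/\Holderindex}$. The one point you leave vague --- that for $\Holderindex=\tfrac12$ a constant $\Yq_n$ cannot simply be ``absorbed'' into the decaying rate --- is handled in the paper by choosing $\Yq_n=\sqrt{\Yp_n}$ (and $\Yq_n=2\Yp_n$ when $\Holderindex\in(\tfrac12,1]$) together with the elementary observation $\Yq_n\leq 1/\Yr_n$, so the boundary term is controlled by the same $1/\Yr_n$ factor as everything else.
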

\begin{remark}
Here,  $C_1(t,\Holderindex)  \leq  \frac{2}{\log 2}  {\left(1 + t c_{\Holderindex}^{2} +t  \right) }$.
\end{remark}

\begin{proof}
Recall that $Y_n$ is defined by 
\[
Y_n (t) =X(t) - X_n (t) = \int_{0}^t \sigma (X(s) ) - \sigma_n (X_n (s) ) dB_s. 
\]
Thus, 
for every $t \geq 0$ and $n \in \bN$ 
we define $U_n$ and $V_n$ as follows: 
\begin{align*}
	U_n (t) &=\int_{0}^t \sigma (X(s) ) - \sigma (X_n (s) ) dB_s \text{ and }
	V_n (t) =\int_{0}^t \sigma (X_n (s) ) - \SDEa_n (X_n (s) ) dB_s. 
\end{align*}
By Kunita and Watanabe's inequality \cite[(1.15) Proposition in Capter IV]{RevuzYor1999} and Jensen's inequality, 
for all $s \geq 0$ and $ n \in \bN$, we have 
\begin{align*}
\langle  Y_n  \rangle_s 
	=\langle  U_n  + V_n   \rangle_s 
	&\leq \langle U_n \rangle_s  +2\langle U_n \rangle_s^{\frac{1}{2}} \langle  V_n \rangle_s^{\frac{1}{2}} +\langle V_n \rangle_s 
 	\\&\leq 2\langle U_n \rangle_s  +2\langle  V_n \rangle_s. 
\end{align*}
By the H\"older continuity of $\sigma$ and the definition of $\Delta_n$ in $(\ref{def:delta})$, we have 
\[
\langle U_n \rangle_s \leq c_{\Holderindex}^{2} |X(s)- X_n (s)|^{2\Holderindex} \text{ and } 
\langle V_n \rangle_s \leq \Delta_n^2.  
\]
It follows that 
\begin{align*}
&\int_0^{t} \frac{d\langle  Y_n   \rangle_s   }{\rho (|\partY (s) |)} 1_{ (\Yp_n, \Yq_n)} (|\partY (s)|) 
	\leq 	2t \left( c_{\Holderindex}^{2}	+ \left(\frac{\Delta_n}{\Yp_n^{\Holderindex}}\right)^2  \right). 
\end{align*}
where we define $\rho  (u) =u^{2\Holderindex} \ (u \geq 0)$. 
By Lemma \ref{main claim}, we have 
\begin{subequations}
\begin{align}\label{Yn estimation0}
\bE (| X(t) -X_n (t)  |)
\leq \Yq_n +\frac{t}{ \Yr_n} 	 \left( c_{\Holderindex}^{2}	+ \left(\frac{\Delta_n}{\Yp_n^{\Holderindex}}\right)^2  \right).
\end{align}

For $ 0<\Yp_n < \Yq_n <1$ where $n \in \bN$, 
the mean-value theorem gives 
\[
0< \left( \Yq_n - \Yp_n \right) \leq \Yr_n =\int_{\Yp_n}^{\Yq_n}  \frac{dx}{x^{2\Holderindex}} \leq \frac{1}{\Yp_n } \left( \Yq_n - \Yp_n \right).
\]
It follows that $\Yq_n\Yr_n  \leq \Yp_n^{-1} \Yq_n\left( \Yq_n - \Yp_n \right)<1$, 
that is $\Yq_n \leq 1/\Yr_n$ for every $n \in \bN$, 
and 
\[
\bE (| X(t) -X_n (t)  |)\leq \frac{1}{ \Yr_n} \left(1 + t c_{\Holderindex}^2 +
	 t \left(\frac{\Delta_n}{\Yp_n^{\Holderindex}}\right)^2   \right) .
\]
For $\Holderindex  \in [\frac{1}{2}, 1]$, we have 
\[
\Yr_n = \Yp_n^{1-2\Holderindex} \int_{1}^{\Yp_n^{-1} \Yq_n}  \frac{dy}{y^{2\Holderindex}}. 
\]
If $\Holderindex =\frac{1}{2}$, we select $\Yq_n = \sqrt{\Yp_n}$ and hence obtain 
\[
	\Yr_n = -\frac{1}{2} \log \Yp_n.
\] 
If $\Holderindex \in (\frac{1}{2},1]$, for every $n \in \bN$ satisfying $\Yq_n =2 \Yp_n <1$, we obtain 
\[
	\Yr_n = \Yp_n^{1-2\Holderindex} \left\{\frac{1}{1-2\Holderindex} (2^{1-2\Holderindex}-1)\right\} \geq \Yp_n^{1-2\Holderindex} \times  \frac{1}{2} \log 2.
\]
Finally, putting $\Yp_n =\Delta_n^{1/\Holderindex} $, 
for every $n \in \bN$ satisfying $\Delta_n <2^{-\Holderindex}$, we have $0 \leq \Yp_n \leq \Yq_n \leq 1$. 
We hence conclude that 
for every $n \in \bN$ satisfying $\Delta_n <2^{-\Holderindex}$ and $t \geq 0$, we have 
\[
\bE (| X(t) -X_n (t)  |)\leq \left(1 + t c_{\Holderindex}^2 +t  \right) 
	\begin{cases}
	\displaystyle 2 \left( - \log  \Delta_n  \right)^{-1} \ & \text{ if } \Holderindex=\frac{1}{2},  \\
	\displaystyle \frac{2}{\log 2} \Delta_n^{ 2 -(1/\Holderindex) }
	& \text{ if }  \Holderindex \in (\frac{1}{2},1]. 
	\end{cases}
\]
\end{subequations}
\end{proof}

By the $L^1$-estimation, we obtain a convergence rate in terms of the $L^2$-sup-norm. 
\begin{proof}[Proof of Theorem \ref{theorem for classical stability problems in sup norm}]
Let $r$ be an arbitrary positive number. 
For every $s \geq 0$ and $n \in \bN$, we have $\bP$-almost surely, 
\begin{align*}
	&\left\{ \sigma (X(s) ) - \sigma_n (X_n (s) )  \right\}^2
	=\left\{ \sigma (X(s) ) - \sigma (X_n (s) ) +\sigma (X_n (s) ) - \sigma_n (X_n (s) )  \right\}^2 
	\\&\leq 2\left\{ \sigma (X(s) ) - \sigma (X_n (s) )  \right\}^2 +2\Delta_n^2
	\\&= 2\left\{ \sigma (X(s) ) - \sigma (X_n (s) )  \right\}^2 
		\left( 1_{\left\{ |X(s)- X_n (s)| <r \right\} } + 1_{\left\{ |X(s)- X_n (s)| \geq r \right\} } \right)
		+2\Delta_n^2
	\\&\leq 2 c_{\Holderindex}^{2} r^{2\Holderindex}+4\| \sigma \|_{\infty}^2 1_{\left\{ |X(s)- X_n (s)| \geq r \right\} }
		+2 \Delta_n^2.
\end{align*}
By Chebyshev's inequality, we obtain 
\begin{align*}
\bE ( \int_{0}^T \left\{ \sigma (X(s) ) - \sigma_n (X_n (s) )  \right\}^2 ds )
 	\leq \int_0^T 2c_{\Holderindex}^{2} r^{2\Holderindex}+ 4\| \sigma \|_{\infty}^2  r^{-1} \bE (|X(t)-X_n (t)|) +2 \Delta_n^2 \, ds. 
\end{align*}
Thus, selecting $r$ such that 
\[
r =\left(  \sup_{0 \leq t \leq T} \bE (|X(t)-X_n (t)|)  \right)^{\frac{1}{2}},
\]
since $\Delta_n^2 \leq r^{2\Holderindex} \leq r^{\frac{1}{2} }$ for $r \in [0,1]$ and $\Holderindex \in [\frac{1}{2}, 1]$, we obtain 
\begin{align*}
\bE ( \int_{0}^T \left\{ \sigma (X(s) ) - \sigma_n (X_n (s) )  \right\}^2 ds )
 	\leq ( 2c_{\Holderindex}^{2}+ 4\| \sigma \|_{\infty}^2+2)T \left(  \sup_{0 \leq t \leq T} \bE (|X(t)-X_n (t)|)  \right)^{\frac{1}{2}}. 
\end{align*}
Doob's maximal inequality then gives 
\begin{align*}
 \bE (\sup_{0 \leq t \leq T}|X(t) -X_n (t) |)^2 
 	\leq 4 ( 2c_{\Holderindex}^{2}+ 4\| \sigma \|_{\infty}^2+2)T \left(  \sup_{0 \leq t \leq T} \bE (|X(t)-X_n (t)|)  \right)^{\frac{1}{2}}. 
\end{align*}
The desired inequality then follows  from Lemma \ref{lemma for classical stability problems}. 
\end{proof}
\begin{remark}
$C_2 (T, \Holderindex)$ satisfies $C_2 (T,\Holderindex) \leq 4 ( 2c_{\Holderindex}^{2}+ 4\| \sigma \|_{\infty}^2+2)T \sqrt{ C_1 (T,\Holderindex)}$
and $C_1 (T,\Holderindex)$ is given by Lemma \ref{lemma for classical stability problems}. 
\end{remark}

\section{A Generalization of the Nakao-Le Gall Condition}\label{Nakao-Le Gall}
In this section, 
we consider nondegenerate diffusion coefficients satisfying a generalized Nakao-Le Gall condition. 
This treatment is slightly modified from that of Le Gall \cite{LeGall1983} and Nakao \cite{N1972} . 
\begin{definition}[Generalized Nakao-Le Gall condition]\label{conditionC}
Let $\infc$ and $\Holderindexl$ be in $[0, \frac{1}{2}]$ and $f$ be 
a monotonically increasing and uniformly bounded function such that $\| f \|_{\infty} :=\sup_{x \in \bR} |f(x)| < \infty $. 
If a Borel function $\sigma$ satisfies
\begin{align*}
 \infc \leq \sigma (x) 
\end{align*}
for every $x \in \bR$ and 
\begin{align*}
|\sigma (x) - \sigma (y) |  \leq  |x-y|^{\Holderindexl}| f(x) - f(y) |^{\frac{1}{2}},
\end{align*}
for every $x, y \in \bD$ and $|x-y|\leq 1$, 
then $\sigma$ satisfies the generalized Nakao-Le Gall condition, and we write $ \sigma \in \gNL$. 
\end{definition}

\begin{remark}\label{gNL has a unique strong solution}
Suppose that $ \sigma \in \mathcal{C} (\infc,  f, 0 )$ and $\infc>0$. 
Then 
$\sigma^{-2}$ is locally integrable, 
and equation $(\ref{sde0})$ has non-trivial weak solution up to stopping time $S$ given by $(\ref{a stopping time})$. 
By the Feller test, the boundary $0$ is a nonsingular boundary, and $\bP (S<\infty)=1$. 
Furthermore, Le Gall showed that the local time of boundary $0$ is zero: 
\[
L_t^{0}(X) =0  \text{ for every } t \geq 0. 
\]
Thus, pathwise uniqueness holds for $(\ref{sde0})$. 
Because $\mathcal{C} (\infc, f, \Holderindexl ) \subset \mathcal{C} (\infc,  f, 0 )$, 
we conclude that 
the SDEs $(\ref{sde0})$ with $ \sigma \in \gNL$ have unique strong solutions if $\infc>0$. 
\end{remark}

Let $\sigma$ be a real-valued Borel function $\sigma: \bR \to \bR$ again satisfies 
$\left( \sigma (x) \right)^2 >0$ for every $x \in I$. 
A solution $X$ of the SDE $(\ref{sde0})$ satisfies 
\begin{align*}
X (t) -X (0) = \int_0^t \sigma (X (s)) dB_s, \ X(0) =x_0 \in I. 
\end{align*}
For every $\infc \geq 0$ and $x \in \bR$, we denote 
\[
\sigma^{\infc} (x) := \infc + \sigma (x), 
\]
and write a solution $X^\infc$ as follows: 
\begin{align*}
X^\infc (t) -X^\infc (0) = \int_0^t \sigma^\infc (X^\infc (s)) dB_s, \ X^\infc(0) =x_0 \in I. 
\end{align*} 
In the same way, we define $\sigma^{\infc}_n$ and $X_n^\infc$ for solutions $X_n$ of the SDE $(\ref{sde1n})$. 
\begin{example}\label{positive sigma and PU}
If $\sigma$ is H\"older-continuous with exponent $\Holderindex \in [0,1)$, it follows that for every $x,y \in \bR$, 
\[
\left| \sigma(x) -  \sigma(y) \right| \leq 	|x-y|^{\frac{\Holderindex}{2}} 	\left| \sigma(x) -  \sigma(y) \right|^{\frac{1}{2} }. 
\]
Moreover, suppose that $\sigma$ a monotonically increasing or decreasing non-negative bounded function. 
Then $\sigma^\infc \in  \mathcal{C} (\infc,  \sigma, \frac{\Holderindex}{2} )$. 
\end{example}
\begin{example}
Let us consider the skewed Brownian diffusion coefficient \cite{HarrisonShepp} given by 
$\sigma_{\alpha} (x) = \alpha 1_{\left\{x>0 \right\} }(x)+ (1-\alpha) 1_{\left\{ x \leq 0 \right\} }(x)$, 
where $x \in \bR$. 
Then we have $\sigma_{\alpha} \in \mathcal{C} (\alpha,  \sigma_\alpha, 0)$.
\end{example}

For analyzing stability problem, we recall 
\[
\Delta_n = \sup_{x \in \bR} |\sigma (x) - \sigma_n (x)|. 
\]
\begin{lemma}\label{gNLupper}
Suppose that $\sigma \in \mathcal{C} (0, f, \Holderindexl )$ and 
$\{ \sigma_n \}_{n \in \bN}$ is a series of Borel functions satisfying 
$\sigma_n (x) \geq 0 $ for $\bR$ and $\sup_{n \in \bN}\| \sigma_n\|_\infty < \infty$. 
Then, there exists a $C_3 (t) >0$ dependent on $t \geq 0$ 
such that for every $n \in \bN$ satisfying $\Delta_n<2^{-(\Holderindexl+\frac{1}{2}) }$ and for every $\infc \in (0,1)$, 
\begin{align*}
 \bE (|X^\infc (t) -X_n^{\infc} (t)  |) \
 	\leq & {\infc^{-2}}  C_3 (t) \times 
	\begin{cases}
		\left( -\log  \Delta_n\right)^{-1} \ & \text{if } \Holderindexl=0,  \\
		\Delta_n^{2-(2/(2\Holderindexl+1))} \ &\text{if } \Holderindexl \in (0,\frac{1}{2}].
	\end{cases}
\end{align*}
\end{lemma}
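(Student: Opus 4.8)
The plan is to imitate the proof of Lemma~\ref{lemma for classical stability problems}, replacing the pure H\"older bound on $\langle U_n\rangle$ by the generalized Nakao--Le Gall estimate and carrying the lower bound $\infc$ on the diffusion coefficient through the local-time argument. First I would set $Y_n(t)=X^{\infc}(t)-X_n^{\infc}(t)$ and split, exactly as before, $Y_n=U_n+V_n$ with
\[
U_n(t)=\int_0^t \sigma^{\infc}(X^{\infc}(s))-\sigma^{\infc}(X_n^{\infc}(s))\,dB_s,\qquad
V_n(t)=\int_0^t \sigma^{\infc}(X_n^{\infc}(s))-\sigma_n^{\infc}(X_n^{\infc}(s))\,dB_s,
\]
so that $\langle Y_n\rangle_s\le 2\langle U_n\rangle_s+2\langle V_n\rangle_s$ by Kunita--Watanabe and Jensen. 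Since $\sigma^{\infc}(x)-\sigma^{\infc}(y)=\sigma(x)-\sigma(y)$ and $\sigma\in\mathcal C(0,f,\Holderindexl)$, on the event $|X^{\infc}(s)-X_n^{\infc}(s)|\le 1$ we get $\langle U_n\rangle_s\le |Y_n(s)|^{2\Holderindexl}\,|f(X^{\infc}(s))-f(X_n^{\infc}(s))|$, while $\langle V_n\rangle_s\le\Delta_n^2$.

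The key new point is the choice of the function $\rho$ fed into Lemma~\ref{main claim}. I would take $\rho(u)=u^{2\Holderindexl+1}$ on $[0,1]$, which is strictly increasing with $\rho(0)=0$. The extra factor $u$ (compared with the H\"older case $u^{2\Holderindex}$) is exactly what is needed to absorb the $f$-increment: writing $M=Y_n$ and restricting to $(\Yp_n,\Yq_n)$,
\[
\int_0^t \frac{d\langle Y_n\rangle_s}{\rho(|Y_n(s)|)}\,1_{(\Yp_n,\Yq_n)}(|Y_n(s)|)
\le 2\int_0^t \frac{|Y_n(s)|^{2\Holderindexl}\,|f(X^{\infc}(s))-f(X_n^{\infc}(s))|}{|Y_n(s)|^{2\Holderindexl+1}}\,1_{(\Yp_n,\Yq_n)}\,ds
+\frac{2\Delta_n^2\,t}{\Yp_n^{2\Holderindexl+1}}.
\]
For the first term, $|Y_n(s)|>\Yp_n$ gives a factor $\Yp_n^{-1}$, and the time-integral of $|f(X^{\infc}(s))-f(X_n^{\infc}(s))|$ has to be controlled. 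This is where the lower bound $\infc$ enters: since $\sigma^{\infc}\ge\infc$ and $\sigma_n^{\infc}\ge\infc$, the occupation-times formula (as in Le Gall's original argument, cf.~Remark~\ref{gNL has a unique strong solution}) bounds $\int_0^t |f(X^{\infc}(s))-f(X_n^{\infc}(s))|\,ds$ by a constant multiple of $\infc^{-2}$ times the total variation of $f$, i.e.~by $C\,\infc^{-2}\|f\|_{\infty}\,t$ after using $X^{\infc}(s)\ge X_n^{\infc}(s)$-type comparison or simply bounding $|f(X^{\infc}(s))-f(X_n^{\infc}(s))|\le 2\|f\|_\infty$ together with an $L^1$ occupation bound; the cleanest route is the local-time identity $\int_0^t g(Y_n(s))\,d\langle Y_n\rangle_s=\int_{\bR} g(a)L_t^a(Y_n)\,da$ and the fact that $d\langle Y_n\rangle_s\le 2\langle U_n\rangle_s'+2\Delta_n^2$ with the $\infc^{-2}$ coming from inverting the diffusion speed of $X^{\infc}$ and $X_n^{\infc}$.

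After that the computation is a rerun of Lemma~\ref{lemma for classical stability problems} with exponent $2\Holderindexl+1$ in place of $2\Holderindex$. From Lemma~\ref{main claim},
\[
\bE(|Y_n(t)|)\le \Yq_n+\frac{1}{\Yr_n}\Bigl(\infc^{-2}C(t)+t\,\infc^{-2}\tfrac{\Delta_n^2}{\Yp_n^{2\Holderindexl+1}}\Bigr),
\]
and the mean-value estimate $\Yq_n-\Yp_n\le \Yr_n=\int_{\Yp_n}^{\Yq_n}y^{-(2\Holderindexl+1)}\,dy\le \Yp_n^{-(2\Holderindexl+1)}(\Yq_n-\Yp_n)$ gives $\Yq_n\le 1/\Yr_n$. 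For $\Holderindexl=0$ pick $\Yq_n=\sqrt{\Yp_n}$, $\Yp_n=\Delta_n$, so $\Yr_n=-\tfrac12\log\Yp_n$ and the $(-\log\Delta_n)^{-1}$ rate drops out; for $\Holderindexl\in(0,\tfrac12]$ pick $\Yq_n=2\Yp_n$ and $\Yp_n=\Delta_n^{1/(2\Holderindexl+1)}$, giving $\Yr_n\ge c\,\Yp_n^{-2\Holderindexl}=c\,\Delta_n^{-2\Holderindexl/(2\Holderindexl+1)}$ and hence the rate $\Delta_n^{2-2/(2\Holderindexl+1)}$, with the constraint $\Delta_n<2^{-(\Holderindexl+1/2)}$ needed to keep $0\le\Yp_n\le\Yq_n\le1$. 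Collecting the $\infc^{-2}$ factors yields $C_3(t)$.

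The main obstacle is the rigorous control of $\int_0^t|f(X^{\infc}(s))-f(X_n^{\infc}(s))|\,ds$ by $\infc^{-2}$ times a finite quantity: one must convert the $f$-increment (which is merely bounded, not continuous) into something integrable against $d\langle Y_n\rangle$ and then use the uniform ellipticity $\sigma^{\infc},\sigma_n^{\infc}\ge\infc$ to pass from $d\langle Y_n\rangle_s$ back to $ds$ at the cost of $\infc^{-2}$. This is precisely the place where Le Gall's vanishing-boundary-local-time idea and the occupation-times formula do the real work, and getting the dependence on $\infc$ to come out as $\infc^{-2}$ (and, after the $L^2$-sup upgrade, $\infc^{-3}$ in Theorem~\ref{gNLupper in sup norm}) rather than a worse power is the delicate point.
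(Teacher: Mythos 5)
Your skeleton is the paper's: take $\rho(u)=u^{2\Holderindexl+1}$, feed $Y_n=U_n+V_n$ into Lemma \ref{main claim}, and rerun Lemma \ref{lemma for classical stability problems} with $2\Holderindex$ replaced by $2\Holderindexl+1$. But the one genuinely new step --- bounding
\[
\bE\left(\int_0^t \frac{|f(X(s))-f(X_n(s))|}{|Y_n(s)|}\,1_{(\Yp_n,\Yq_n)}(|Y_n(s)|)\,ds\right)
\]
by $\infc^{-2}c\,t^{1/2}$ \emph{uniformly in $n$ and in $\Yp_n$} --- is exactly the step you leave open, and neither of the two routes you sketch for it works. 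Bounding $|Y_n(s)|^{-1}\le \Yp_n^{-1}$ and then controlling $\int_0^t|f(X(s))-f(X_n(s))|\,ds$ leaves an extra factor $\Yp_n^{-1}$ multiplying $1/\Yr_n\sim\Yp_n^{2\Holderindexl}$, which diverges as $\Yp_n\to0$ for every $\Holderindexl<\frac12$ and destroys the rate. The occupation-times identity for $Y_n$ does not apply either, because $|f(X(s))-f(X_n(s))|/|Y_n(s)|$ is not a function of $Y_n(s)$ alone. The paper's resolution is an interpolation identity: after replacing $f$ by smooth increasing approximations $f_l$ (legitimate because the discontinuity set $D$ of $f$ is countable and, thanks to $\sigma^\infc,\sigma_n^\infc\ge\infc$, the processes spend zero Lebesgue time in $D$ by the occupation-times formula), one writes $|f_l(X(s))-f_l(X_n(s))| = |Y_n(s)|\int_0^1 f_l'(\YorZ^{\Ztheta}_n(s))\,d\Ztheta$ with $\YorZ^{\Ztheta}_n= \Ztheta X+(1-\Ztheta)X_n$; the factor $|Y_n(s)|$ cancels $|Y_n(s)|^{-1}$ exactly, and then $ds\le\infc^{-2}\,d\langle \YorZ^{\Ztheta}_n\rangle_s$, the occupation-times formula for $\YorZ^{\Ztheta}_n$, the bound $\int_{\bR} f_l'(a)\,da\le 2\|f_l\|_\infty$ for increasing bounded $f_l$, and the Tanaka estimate $\bE (L_t^a(\YorZ^{\Ztheta}_n))\le \bE(|\YorZ^{\Ztheta}_n(t)-\YorZ^{\Ztheta}_n(0)|)\le t^{1/2}(\|\sigma\|_\infty+\sup_n\|\sigma_n\|_\infty)$ produce the $\infc^{-2}t^{1/2}$ bound. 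Without this cancellation the proof does not close; merely invoking ``Le Gall's idea'' is not enough.

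Separately, your parameter choice $\Yp_n=\Delta_n^{1/(2\Holderindexl+1)}$ is off by a factor of two in the exponent: it gives $1/\Yr_n\lesssim\Yp_n^{2\Holderindexl}=\Delta_n^{2\Holderindexl/(2\Holderindexl+1)}=\Delta_n^{1-1/(2\Holderindexl+1)}$, only half the claimed rate. The correct choice is $\Yp_n=\Delta_n^{2/(2\Holderindexl+1)}$ (i.e.\ the substitution $\Holderindex=\Holderindexl+\frac12$ in the choice $\Yp_n=\Delta_n^{1/\Holderindex}$ of Lemma \ref{lemma for classical stability problems}), which makes $\Delta_n^2/\Yp_n^{2\Holderindexl+1}=1$ and $1/\Yr_n\lesssim\Delta_n^{4\Holderindexl/(2\Holderindexl+1)}=\Delta_n^{2-2/(2\Holderindexl+1)}$ as stated.
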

\begin{remark}
For any $\Holderindexl \in [0,\frac{1}{2}]$, $C_3$ satisfies 
\[
C_3 (t) \leq 
	\frac{2}{\log 2} 
	 \left(1 + t^{\frac{1}{2}} c_{\Holderindexl} +t  \right)
	 \text{ and }  c_{\Holderindexl}=2\left(\| \sigma \|_{\infty}  + \sup_{n \in \bN}\| \sigma_n \|_{\infty} \right)  \| f \|_{\infty}.
\]
\end{remark}

\begin{proof}
As this proof concerns only $X^\infc$ and $X_n^{\infc}$, we drop the superscripts $\infc$ hereafter. 

By the argument used to prove Lemma \ref{lemma for classical stability problems}, where $2\Holderindexl+1= 2\Holderindex$, 
it is sufficient to prove that for some ${C_{} (f)}_t$, where 
\begin{align*}
 &\bE \left( \int_0^{t} \frac{d \langle \partSig \rangle_s }{ |\partY (s) |^{2\Holderindexl+1}} 1_{ (\Yp_n, \Yq_n)} (|\partY (s)|)  \right)
	\\&\leq\bE \left( \int_0^t  {\left| f ( X (s) ) - f ( X_n (s) ) \right| } { |\partY (s)|^{-1}}   1_{ (\Yp_n, \Yq_n)} (|\partY (s)|)  ds \right) =:{C_{} (f)}_t, 
\end{align*}
there exists some $c_{\Holderindexl} >0$ independent of $n \in \bN$ and $t\geq 0$ such that  
\[
{C_{} (f)}_t \leq  \infc^{-2} c_{\Holderindexl} t^{\frac{1}{2}}. 
\]

We now choose a sequence $\{ f_l (x) \}_{l \in \bN}$ of uniformly bounded (in $l$ and $x$) increase functions in $C^1 (\bR)$ such that 
$f_l (x) \rightarrow f (x) \ (l \rightarrow \infty)$ for any continuous point $x$ of $f$. 
The set $D$ of discontinuity points in $f$ is at most countable.  
The inequality $\infc \leq \sigma^\infc (x) $ for $x \in \bR$ implies that $t \leq  \infc^{-2}    \langle X \rangle_t \ (t \geq 0) $. 
Similarly, we have $t  \leq  \infc^{-2} \langle  X_n \rangle_t \ (t \geq 0)$. 
Thus, by the occupation times formula  for $X$ and $X_n$ (cf.~\cite[page 224, (1.6) Corollary]{RevuzYor1999}), we obtain 
\begin{align*}
&{\rm Leb} \left\{ 0 \leq s \leq t   :   X(s)   \in D \right\}+{\rm Leb} \left\{ 0 \leq s \leq t     :   X_n (s)  \in D \right\}
\\& =\int_{0}^{t}  1_{D} (X(s))  ds + \int_{0}^{t} 1_{D} (X_n (s))  ds 
\\&\leq \infc^{-2} \int_{0}^{t} 1_{D} (X(s))  d \langle X \rangle_s + \infc^{-2}\int_{0}^{t}  1_{D} (X_n (s)) d \langle X_n \rangle_s 
\\&=\infc^{-2}  \int_{D} L_t^a (X ) da +\infc^{-2}\int_{D} L_t^a (X_n ) da =0, 
\end{align*}
where 
${\rm Leb}$ is the Lebesgue measure, 
and $L_t^a (X_n )$ denotes the local time of $X_n $ accumulated at $a$ until time $t$. 
Therefore, we have 
\[
\bP \left( 
\lim_{l \to \infty}\left(  f_l ( X (s) ) - f_l ( X_n (s) )  \right)  =f ( X (s) ) - f ( X_n (s) ) \right)=1 
\]
for almost all $s \leq t$. It follows that ${C_{} (f)}_t =\lim_{l \to \infty} {C_{} (f_l)}_t $, where 
\[
{C_{} (f_l)}_t := 	
\bE \left( \int_0^t  {\left| f_l ( X (s) ) - f_l ( X_n (s) ) \right| } { |\partY (s)|^{-1}}   1_{ (\Yp_n, \Yq_n)} (|\partY (s)|)  ds \right) .
\]

For $\Ztheta$ in $[0,1]$, set $\YorZ^{\Ztheta}_n (t):= \Ztheta X (t)  + (1-\Ztheta)  X_n ( t )$ and 
$\eta^\Ztheta_n (t)(\omega):= \Ztheta \sigma (X (t) (\omega))  + (1-\Ztheta) \SDEa_n ( t, X_n  (\omega) ) \ (t \geq 0, \ \omega \in \Omega) $. 
Then we have 
\[
\YorZ^{\Ztheta}_n (t)= \YorZ^{\Ztheta}_n (0) +\int_0^t \eta^\Ztheta_n (s) dB_s 
\]
and $\left| f_l ( X (t) ) - f_l ( X_n (t) )   \right| =|\partY (t)| \int_0^1  f_l^{'}(\YorZ^{\Ztheta}_n (t))  d\Ztheta \  (t \geq 0)$. 
It follows that 
\begin{align*}
{C_{} (f_l)}_t =
	\bE \left(
	\int_{0}^{t}  \left(  \int_0^1 f_l^{'}(\YorZ^{\Ztheta}_n (s)) d\Ztheta   \right)^{}  1_{ (\Yp_n, \Yq_n)} (|\partY (s)|)
		ds \right). 
\end{align*}
As $u\infc+(1-u)\infc=\infc \leq \eta^\Ztheta_n (t, \omega) \ (t \geq 0, \ \omega \in \Omega )$, we have 
$ s \leq \infc^{-2} \langle \YorZ^{\Ztheta}_n \rangle_s \ (s \geq 0) $. 
By Jensen's inequality and the occupation times formula for $\YorZ^{\Ztheta}_n$, we have 
\begin{align*}
{C_{} (f_l)}_t
	&\leq   \bE \left( \int_{-\infty}^{\infty}  
		\left( \int_0^1  f_l^{'}(a)      L_t^a (\YorZ^{\Ztheta}_n)   \infc^{-2}  d\Ztheta \right) da \right).
\end{align*}

Now, by the Meyer-Tanaka formula, 
for $a \in \bR$ and $t \geq 0$, we obtain $\bP$-almost surely
\begin{align*}
L_t^a (\YorZ^{\Ztheta}_n) 
&=  |\YorZ^{\Ztheta}_n (t) -a|- |\YorZ^{\Ztheta}_n (0)  -a| -\int_0^t {\rm sgn} (\YorZ^{\Ztheta}_n(s)-a) dX (s)
\\ &\leq | \YorZ^{\Ztheta}_n(t)  - \YorZ^{\Ztheta}_n(0)  | -\int_0^t {\rm sgn} (\YorZ^{\Ztheta}_n (s)-a) \sigma^\Ztheta_n (s)  dB_s. 
\end{align*}
As $\| \sigma \|_{\infty}=\sup_{x \in \bR}| \sigma (x) |\leq |\sigma (0)| +  \| f \|_{\infty}<\infty$, 
we have for every $t \geq 0$
\[
\bE ( | X (t) | ) \leq \left[ \bE ( \int_0^t \left\{ \sigma (X (s)) \right\}^2 ds )\right]^{\frac{1}{2}} \leq t^{\frac{1}{2}} \| \sigma \|_{\infty} < \infty. 
\]
In the same way, assuming $\sup_{n \in \bN}\| \sigma_n\|_\infty < \infty$, we have 
\[
\bE ( | X_n (t) | )  \leq  t^{\frac{1}{2}} \sup_{n \in \bN}\| \sigma_n \|_{\infty}  < \infty.  
\]
It follows that for every $s \geq 0$
\begin{align*}
\bE (| \YorZ^{\Ztheta}_n(s)-\YorZ^{\Ztheta}_n(0) |)
\leq  t^{\frac{1}{2}}  \left( \| \sigma \|_{\infty}  + \sup_{n \in \bN}\| \sigma_n \|_{\infty} \right) 
< \infty. 
\end{align*}
Then 
\[
\int_0^t {\rm sgn} (\YorZ^{\Ztheta}_n(s)-a)  \sigma^\Ztheta_n   (s) dB_s
\]
is a martingale vanishing at $0$. 
Thus we have 
\begin{align*}
{C_{} (f_l)}_t &\leq 
	2 \infc^{-2} \left(\| \sigma \|_{\infty}  + \sup_{n \in \bN}\| \sigma_n \|_{\infty} \right)
	  t^{\frac{1}{2}}   \| f_l \|_{\infty}.
\end{align*}
Letting $l \to \infty$ 
and denoting 
$c_{\Holderindexl}=2\left(\| \sigma \|_{\infty}  + \sup_{n \in \bN}\| \sigma_n \|_{\infty} \right)  \| f \|_{\infty}$
, we conclude that 
\begin{align*}
	{C_{} (f)}_t \leq \infc^{-2}  c_{\Holderindexl} t^{\frac{1}{2}}< \infty.
\end{align*}
\end{proof}

As shown in Theorem \ref{theorem for classical stability problems in sup norm}, 
we derive the inequality in term of the $L^2$-sup-norm as follows. 
\begin{lemma}\label{gNLupper2} 
Suppose that the assumption of Lemma \ref{gNLupper} holds.  
Then, for  any $T>0$, there exists a $T$-dependent $C_4 (T)>0 $ 
such that for every $n \in \bN$ satisfying $\Delta_n <2^{-(\Holderindexl+\frac{1}{2}) }$ and $0<\infc <1$, we have
\begin{align*}
 \bE (\sup_{0 \leq t \leq T}|X^{\infc} (t) -X_n^{\infc} (t) |)^2 \leq 
   \infc^{-3}  C_4 (T) \times 
	\begin{cases}
		\left( -\log  \Delta_n\right)^{-\frac{1}{2}} \ & \text{if } \Holderindexl=0,  \\
		\Delta_n^{(1-1/(2\Holderindexl+1)) } \ &\text{if } \Holderindexl \in (0,\frac{1}{2}]. 
	\end{cases}
\end{align*}
\end{lemma}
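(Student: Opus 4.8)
The plan is to deduce the $L^{2}$-sup estimate from the $L^{1}$ estimate of Lemma~\ref{gNLupper} exactly as Theorem~\ref{theorem for classical stability problems in sup norm} was deduced from Lemma~\ref{lemma for classical stability problems}, the only new point being that the region where $X^{\infc}$ and $X_{n}^{\infc}$ are close must be controlled through the occupation-times computation already carried out in the proof of Lemma~\ref{gNLupper}, rather than through plain H\"older continuity. As there, I would drop the superscripts and write $\partY=X-X_{n}$ (meaning $X^{\infc}-X_{n}^{\infc}$), a continuous martingale vanishing at $0$ with $d\langle \partY\rangle_{s}=(\sigma^{\infc}(X(s))-\sigma_{n}^{\infc}(X_{n}(s)))^{2}\,ds=(\sigma(X(s))-\sigma_{n}(X_{n}(s)))^{2}\,ds$, the shift $\infc$ cancelling in the difference of the coefficients. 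Since $\partY$ is a martingale vanishing at $0$, Doob's maximal inequality gives $\bE(\sup_{0\le t\le T}|\partY(t)|^{2})\le 4\,\bE(\langle \partY\rangle_{T})$, so it suffices to bound $\bE(\langle \partY\rangle_{T})$.

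Fix $r\in(0,1]$. I would use $(\sigma(X(s))-\sigma_{n}(X_{n}(s)))^{2}\le 2(\sigma(X(s))-\sigma(X_{n}(s)))^{2}+2\Delta_{n}^{2}$ and split the first term according to whether $|\partY(s)|\ge r$ or $0<|\partY(s)|<r$; on $\{\partY(s)=0\}$ this term vanishes identically since then $X(s)=X_{n}(s)$. On $\{|\partY(s)|\ge r\}$ it is at most $4\|\sigma\|_{\infty}^{2}$, and Chebyshev's inequality gives $\bP(|\partY(s)|\ge r)\le r^{-1}\bE(|\partY(s)|)$. On $\{0<|\partY(s)|<r\}$ the generalized Nakao--Le Gall condition for $\sigma$ yields
\[
(\sigma(X(s))-\sigma(X_{n}(s)))^{2}\le |\partY(s)|^{2\Holderindexl}\,|f(X(s))-f(X_{n}(s))|\le r^{2\Holderindexl+1}\,|\partY(s)|^{-1}\,|f(X(s))-f(X_{n}(s))|,
\]
so that, after integrating over $[0,T]$ and taking expectations, the contribution of this region is at most $r^{2\Holderindexl+1}$ times ${C_{} (f)}_{T}=\bE\bigl(\int_{0}^{T}|f(X(s))-f(X_{n}(s))|\,|\partY(s)|^{-1}\,1_{(0,r)}(|\partY(s)|)\,ds\bigr)$, which in the proof of Lemma~\ref{gNLupper} was bounded by $\infc^{-2}c_{\Holderindexl}T^{1/2}$; crucially the localizing indicator $1_{(\Yp_{n},\Yq_{n})}$ is discarded there (bounded by $1$) before the occupation-times formula and the Meyer--Tanaka estimate are applied, so that bound is valid for the window $(0,r)$ as well. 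Collecting the three contributions,
\[
\bE(\langle \partY\rangle_{T})\le 2\,\infc^{-2}c_{\Holderindexl}T^{1/2}\,r^{2\Holderindexl+1}+8\|\sigma\|_{\infty}^{2}\,T\,r^{-1}\sup_{0\le t\le T}\bE(|\partY(t)|)+2\Delta_{n}^{2}T.
\]

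Now I would take $r=\min\{1,(\sup_{0\le t\le T}\bE(|\partY(t)|))^{1/2}\}$. In the generic case the minimum is the second term; then $r^{2\Holderindexl+1}\le r$ (since $2\Holderindexl+1\ge 1$) and $r^{-1}\sup_{0\le t\le T}\bE(|\partY(t)|)=r$, so $\bE(\langle \partY\rangle_{T})\le(2\infc^{-2}c_{\Holderindexl}T^{1/2}+8\|\sigma\|_{\infty}^{2}T)\,r+2\Delta_{n}^{2}T$; inserting the bound of Lemma~\ref{gNLupper}, $\sup_{0\le t\le T}\bE(|\partY(t)|)\le\infc^{-2}C_{3}(T)g(\Delta_{n})$ with $g(\Delta_{n})=(-\log\Delta_{n})^{-1}$ if $\Holderindexl=0$ and $g(\Delta_{n})=\Delta_{n}^{2-2/(2\Holderindexl+1)}$ if $\Holderindexl\in(0,\frac{1}{2}]$, gives $r\le\infc^{-1}(C_{3}(T)g(\Delta_{n}))^{1/2}$ and hence $\bE(\langle \partY\rangle_{T})\le\infc^{-3}\cdot(\text{const depending on }T)\cdot g(\Delta_{n})^{1/2}+2\Delta_{n}^{2}T$. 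Since $g(\Delta_{n})^{1/2}$ equals $(-\log\Delta_{n})^{-1/2}$, resp. $\Delta_{n}^{1-1/(2\Holderindexl+1)}$, and $\Delta_{n}^{2}\le\text{const}\cdot g(\Delta_{n})^{1/2}$ once $\Delta_{n}$ is below a threshold, multiplying by $4$ (Doob) and absorbing into a single $C_{4}(T)$ gives the claim. In the other case, where $\sup_{0\le t\le T}\bE(|\partY(t)|)>1$ (which forces $g(\Delta_{n})>\infc^{2}/C_{3}(T)$ and hence $\infc^{-3}g(\Delta_{n})^{1/2}>\infc^{-2}/C_{3}(T)^{1/2}$), one checks directly that each of the three summands above is $\le\infc^{-3}C_{4}(T)g(\Delta_{n})^{1/2}$ for $C_{4}(T)$ large enough, exactly as the analogous range of $\Delta_{n}$ is absorbed in the proof of Theorem~\ref{theorem for classical stability problems in sup norm}; here one uses the crude a priori bound $\bE(\sup_{0\le t\le T}|\partY(t)|)^{2}\le\text{const}$, uniform in $\infc\in(0,1)$ since $\|\sigma^{\infc}\|_{\infty}\le 1+\|\sigma\|_{\infty}$ and $\sup_{n}\|\sigma_{n}^{\infc}\|_{\infty}<\infty$.

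The step I expect to be the main obstacle is the small-displacement region: one must be sure the estimate ${C_{} (f)}_{T}\le\infc^{-2}c_{\Holderindexl}T^{1/2}$ of Lemma~\ref{gNLupper} is genuinely independent of the localizing window $(\Yp_{n},\Yq_{n})$, so that it applies to $(0,r)$, and that $\{\partY(s)=0\}$ contributes nothing (clear, since $(\sigma(X(s))-\sigma(X_{n}(s)))^{2}$ vanishes there). A secondary point worth flagging is the $\infc$-bookkeeping: the exponent $-3$ in $\infc^{-3}$, rather than the $-1$ one might naively expect from taking the square root of the $\infc^{-2}$-estimate of Lemma~\ref{gNLupper}, arises from multiplying the $\infc^{-2}$ carried by ${C_{} (f)}_{T}$ (through the $r^{2\Holderindexl+1}$ term) by the extra $\infc^{-1}$ hidden inside $r=(\sup_{0\le t\le T}\bE(|\partY(t)|))^{1/2}$.
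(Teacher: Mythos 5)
Your proposal is correct and follows essentially the same route as the paper: bound $\bE(\langle Y_n\rangle_T)$ by splitting on $\{|Y_n(s)|<r\}$ versus $\{|Y_n(s)|\ge r\}$, control the small-displacement region by $r^{2\Holderindexl+1}{C(f)}_T$ with the window-independent bound ${C(f)}_T\le \infc^{-2}c_{\Holderindexl}T^{1/2}$ from Lemma \ref{gNLupper}, use Chebyshev on the large region, choose $r=(\sup_{0\le t\le T}\bE|Y_n(t)|)^{1/2}$, and finish with Doob's maximal inequality and the $L^1$ estimate. You are in fact somewhat more careful than the paper about the edge cases ($r\le 1$, absorption of the $\Delta_n^2$ term), and your accounting of where the exponent $\infc^{-3}$ comes from matches the paper's computation.
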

\begin{remark}\label{Rem:C4}
$C_4 (T)$ satisfies $C_4 (T) \leq 4 ( c_{\Holderindexl}T^{\frac{1}{2}}+ 4\| f \|_{\infty}^2T+2T) \sqrt{ C_3 (T)}$, 
and $C_3 (T)$ is given by Lemma \ref{gNLupper}. 
\end{remark}

\begin{proof}
Again, this proof concerns only $X^\infc$ and $X_n^{\infc}$, 
so we drop the superscripts $\infc$. 

For an arbitrary $r>0$ and every $s \geq 0$, we obtain $\bP$-almost surely 
\begin{align*}
	&\left\{ \sigma (X(s) ) - \sigma_n (X_n (s) )  \right\}^2
	=\left\{ \sigma (X(s) ) - \sigma (X_n (s) ) +\sigma (X_n (s) ) - \sigma_n (X_n (s) )  \right\}^2 
	\\&\leq 2\left\{ \sigma (X(s) ) - \sigma (X_n (s) )  \right\}^2 +2\Delta_n^2
	\\&= 2\left\{ \sigma (X(s) ) - \sigma (X_n (s) )  \right\}^2 
		\left( 1_{\left\{ |X(s)- X_n (s)| <r \right\} } + 1_{\left\{ |X(s)- X_n (s)| \geq r \right\} } \right)
		+2\Delta_n^2
	\\&\leq r^{2\Holderindexl}  \left| f ( X (t) ) - f ( X_n (t) )   \right|   1_{\left\{ |X(s)- X_n (s)| <r \right\} } 
	+ 4 \| f \|_{\infty}1_{\left\{ |X(s)- X_n (s)| \geq r \right\} }+2\Delta_n^2. 	
\end{align*}
Following the estimation of ${C_{} (f)}_t$ in the proof of Lemma \ref{gNLupper}, we obtain 
\begin{align*}
\bE ( \int_{0}^T \left\{ \sigma (X(s) ) - \sigma_n (X_n (s) )  \right\}^2 ds )
 	\leq  r^{2\Holderindexl+1} {C_{} (f)}_T + \int_0^T 4 \| f \|_{\infty} r^{-1} \bE (|X(t)-X_n (t)|) +2\Delta_n^2 ds. 
\end{align*}
Thus, selecting $r$ such that 
\[
r =
	\left(  \sup_{0 \leq t \leq T} \bE (|X(t)-X_n (t)|)  \right)^{\frac{1}{2}},
\]
we obtain 
\begin{align*}
\bE ( \int_{0}^T \left\{ \sigma (X(s) ) - \sigma_n (X_n (s) )  \right\}^2 ds )
 	\leq (   \infc^{-2} {c_{\Holderindexl}} T^{\frac{1}{2}} +4 \| f \|_{\infty}T+2 T)
	\left(  \sup_{0 \leq t \leq T} \bE (|X(t)-X_n (t)|)  \right)^{\frac{1}{2}}. 
\end{align*}
Again, by Doob's maximal inequality
\begin{align*}
 \bE (\sup_{0 \leq t \leq T}|X(t) -X_n (t) |)^2 
 	\leq 4 
	(   \infc^{-2} {c_{\Holderindexl} T^{\frac{1}{2}}} +4 \| f \|_{\infty}T+2 T)
	 \left(  \sup_{0 \leq t \leq T} \bE (|X(t)-X_n (t)|)  \right)^{\frac{1}{2}}
\end{align*}
where $ \infc^{-2} >1$ and 
\begin{align*}
 \left(  \sup_{0 \leq t \leq T} \bE (|X(t)-X_n (t)|)  \right)^{\frac{1}{2}}
 	\leq & \infc^{-1}  
	\sqrt{ C_3 (T) }\times 
	\begin{cases}
		\left( -\log  \Delta_n\right)^{-\frac{1}{2}} \ & \text{if } \Holderindexl=0,  \\
		\Delta_n^{1-(1/(2\Holderindexl+1))} \ &\text{if } \Holderindexl \in (0,\frac{1}{2}].
	\end{cases}
\end{align*}
Thus, we obtain the desired conclusion. 
\end{proof}
Finally, we prove Theorem \ref{gNLupper in sup norm}. 
\begin{proof}[Proof of Theorem \ref{gNLupper in sup norm}]
As mentioned in Example \ref{positive sigma and PU}, 
$\sigma$ is H\"older-continuous with exponent $\Holderindex \in [0,1)$; 
thus for $x,y \in \bR$ with $|x-y| \leq 1$, it follows that 
\[
\left| \sigma(x) -  \sigma(y) \right| \leq 	|x-y|^{\frac{\Holderindex}{2}} 	\left| \sigma(x) -  \sigma(y) \right|^{\frac{1}{2} }. 
\]
Moreover, $\sigma$ is a function with finite variation. 
Thus, applying Lemma \ref{gNLupper2}  when $\Holderindexl=\frac{\Holderindex}{2}$, 
for any $T>0$, there exists a $T$-dependent $C_4 (T)>0 $ such that 
for every $n \in \bN$ satisfying $\Delta_n <2^{-\frac{1}{2}(\Holderindex+1) }$, 
\begin{align*}
 \bE (\sup_{0 \leq t \leq T}|X^{\infc} (t) -X_n^{\infc} (t) |)^2 \leq  
 	 \infc^{-3}   C_4 (T) \times 
	\begin{cases}
		\left( -\log  \Delta_n\right)^{-\frac{1}{2}} \ & \text{if } \Holderindex=0,  \\
		\Delta_n^{(1-1/(\Holderindex+1)) } \ &\text{if } \Holderindex \in (0,1]. 
	\end{cases}
\end{align*}
This concludes the proof. 
\end{proof}

\section{The Cantor Diffusion}\label{Devil's staircases diffusion}
To demonstrate the strong convergence rate, 
we consider a slight generalization of an iteratively constructed Cantor function 
called the middle-$\lambda$ Cantor function, (cf.~ \cite[Chapter 4]{cantrolDovgoshey20061}).  
Let $\lambda$ be in $(0,1)$. For all $n \in \bN \ \cup  \{0 \} $, we define 
\begin{align*}
\Cantor_{n+1} (x) =
	\begin{cases}
	\frac{1}{2} \Cantor_{n} (\frac{2}{1-\lambda} x) \ &\text{if }0 \leq x <\frac{1-\lambda}{2} 
	\\ \frac{1}{2}  \			  &\text{if } \frac{1-\lambda}{2} \leq x < \frac{1+\lambda}{2} 
	\\ \frac{1}{2}  + \frac{1}{2} \Cantor_{n} ( \frac{2}{1-\lambda} x- \frac{1+\lambda}{1-\lambda} ) \ &\text{if } \frac{1+\lambda}{2} \leq x \leq1 , 
	\end{cases}
\end{align*} 
where $\Cantor_0 : [0,1] \to \bR$ is an arbitrary function. 
Let $\mathscr{M} [0,1]$ be the Banach space of all uniformly bounded real-valued functions on $[0,1]$ with a supremum norm. 

\begin{lemma}[Middle-$\lambda$ Cantor function]\label{middle-lambda Cantor function}
There exists a unique element $\Cantor$ of $\mathscr{M} [0,1]$ such that 
\begin{align}\label{Def: Cantor sigma}
\Cantor (x) =
	\begin{cases}
	\frac{1}{2} \Cantor (\frac{2}{1-\lambda} x) \ &\text{if } 0 \leq x <\frac{1-\lambda}{2} 
	\\ \frac{1}{2}  \			  &\text{if } \frac{1-\lambda}{2} \leq x < \frac{1+\lambda}{2} 
	\\ \frac{1}{2}  + \frac{1}{2} \Cantor ( \frac{2}{1-\lambda} x- \frac{1+\lambda}{1-\lambda} ) \ &\text{if }\frac{1+\lambda}{2} \leq x \leq1 . 
	\end{cases}
\end{align} 
If $\Cantor_0 \in \mathscr{M} [0,1]$ then 
the sequence $\{ \Cantor_n \}_{n=0}^{\infty}$ converges uniformly to 
the middle-$\lambda$ Cantor function $\Cantor$. 
\end{lemma}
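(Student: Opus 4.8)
The plan is to read the recursion $(\ref{Def: Cantor sigma})$ as the iteration of a single contraction on the Banach space $\mathscr{M}[0,1]$ and to apply the Banach fixed point theorem. Define the operator $T\colon \mathscr{M}[0,1]\to\mathscr{M}[0,1]$ by
\[
(Tg)(x)=
	\begin{cases}
	\frac{1}{2}\, g\left(\frac{2}{1-\lambda}\, x\right) & \text{if } 0\leq x<\frac{1-\lambda}{2}, \\
	\frac{1}{2} & \text{if } \frac{1-\lambda}{2}\leq x<\frac{1+\lambda}{2}, \\
	\frac{1}{2}+\frac{1}{2}\, g\left(\frac{2}{1-\lambda}\, x-\frac{1+\lambda}{1-\lambda}\right) & \text{if } \frac{1+\lambda}{2}\leq x\leq 1,
	\end{cases}
\]
so that $\Cantor_{n+1}=T\Cantor_n$ for every $n\in\bN\cup\{0\}$ and a solution of $(\ref{Def: Cantor sigma})$ is precisely a fixed point of $T$.

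First I would verify that $T$ is a genuine self-map of $\mathscr{M}[0,1]$. When $x\in[0,\frac{1-\lambda}{2})$ the rescaled argument $\frac{2}{1-\lambda}x$ stays in $[0,1)$, and when $x\in[\frac{1+\lambda}{2},1]$ the argument $\frac{2}{1-\lambda}x-\frac{1+\lambda}{1-\lambda}$ stays in $[0,1]$; hence the right-hand side only evaluates $g$ inside $[0,1]$, and $\|Tg\|_\infty\leq\frac{1}{2}+\frac{1}{2}\|g\|_\infty<\infty$, so $Tg\in\mathscr{M}[0,1]$.

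Next comes the contraction estimate, which is the heart of the argument and is completely elementary. For $g,h\in\mathscr{M}[0,1]$ the difference $(Tg)(x)-(Th)(x)$ vanishes on the middle block $[\frac{1-\lambda}{2},\frac{1+\lambda}{2})$, while on each of the two outer blocks it equals $\frac{1}{2}$ times a value of $g-h$, so $|(Tg)(x)-(Th)(x)|\leq\frac{1}{2}\|g-h\|_\infty$ for every $x\in[0,1]$; taking the supremum gives $\|Tg-Th\|_\infty\leq\frac{1}{2}\|g-h\|_\infty$. Since $\mathscr{M}[0,1]$ is complete for $\|\cdot\|_\infty$, the Banach fixed point theorem produces a unique $\Cantor\in\mathscr{M}[0,1]$ with $T\Cantor=\Cantor$, which is the asserted unique solution of $(\ref{Def: Cantor sigma})$, and, starting from an arbitrary $\Cantor_0\in\mathscr{M}[0,1]$, the iterates $\Cantor_n=T^n\Cantor_0$ converge to $\Cantor$ in $\|\cdot\|_\infty$, i.e.\ uniformly on $[0,1]$. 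The standard a posteriori bound moreover gives $\|\Cantor_n-\Cantor\|_\infty\leq 2^{-n+1}\|\Cantor_1-\Cantor_0\|_\infty$, which accounts for the geometric decay of $\Delta_n$ used in Section \ref{Devil's staircases diffusion}.

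There is no serious obstacle here: the only points deserving a line of care are the bookkeeping that the rescaled arguments never leave $[0,1]$, so that $T$ really is a self-map and the recursion is well posed even though no continuity of $\Cantor_0$ is assumed, and invoking completeness of the space of bounded real-valued functions on $[0,1]$ under the supremum norm, which is exactly what makes the fixed point theorem applicable in this generality.
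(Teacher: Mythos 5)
Your proof is correct and takes essentially the same route as the paper: both define the recursion as an operator on $\mathscr{M}[0,1]$, establish the contraction bound with constant $\frac{1}{2}$, and apply the Banach fixed point theorem to get existence, uniqueness, and uniform convergence of the iterates. Your added checks that the rescaled arguments stay in $[0,1]$ and the explicit a posteriori bound are welcome but do not change the argument.
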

\begin{proof}

Define a map $H : \mathscr{M} [0,1] \to \mathscr{M} [0,1]$ as 
\begin{align*}
H (g) (x) =
	\begin{cases}
	\frac{1}{2} g (\frac{2}{1-\lambda} x) \ &\text{if } 0 \leq x <\frac{1-\lambda}{2} 
	\\ \frac{1}{2}  \			  &\text{if } \frac{1-\lambda}{2} \leq x < \frac{1+\lambda}{2}
	\\ \frac{1}{2}  + \frac{1}{2}g( \frac{2}{1-\lambda} x- \frac{1+\lambda}{1-\lambda} ) 
	\ &\text{if } \frac{1+\lambda}{2} \leq x \leq1. 
	\end{cases}
\end{align*} 
As
\[
\| H (g_1) - H (g_2 ) \| \leq \frac{1}{2} \| g_1 - g_2 \|,
\]
where $g_1, g_2 \in  \mathscr{M} [0,1]$ and $\| \cdot \|$ denotes the norm in $ \mathscr{M} [0,1]$ 
and $H$ is a contraction map on the complete space $ \mathscr{M} [0,1]$. 
Consequently, by the Banach theorem, $H$ has a unique fixed point $g_\infty$ such that $g_\infty = H (g_\infty )$ 
and $\Cantor_n$ converges to $g_\infty$ with the supremum norm. 
It follows from the definition of $H$, $(\ref{Def: Cantor sigma})$ holds.  
Hence, $H (\Cantor ) =\Cantor$ and the uniqueness imply that $g_\infty =\Cantor$. 
\end{proof}
Hereafter, 
we assume that 
\[
\Cantor_0 \in \mathscr{M} [0,1]. 
\]

An extended Cantor's function $\overline{\Cantor}$  is defined as follows: 
\[
\overline{\Cantor} (x) = 
\begin{cases} 
0 \ &\text{if }x < 0 \\
\Cantor(x) \ &\text{if } 0 \leq x \leq 1 \\
1 \ &\text{if }x > 1.
\end{cases} 
\]
In the same way, we define $\overline{\Cantor}_n$. 
\begin{lemma}\label{Cantor vn}
For every $x\in \bR$ and $n \in \bN$, we have 
\begin{align*}
 | \overline{\Cantor} (x) - \overline{\Cantor}_n (x) |  \leq 2^{-n+1}\sup_{x \in \bR}|  \overline{\Cantor}_0 (x) -  \overline{\Cantor}_1 (x) | . 
\end{align*}
\end{lemma}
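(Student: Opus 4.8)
The plan is to exploit the self-similar recursion defining $\overline{\Cantor}_n$ together with the contraction estimate already established in the proof of Lemma~\ref{middle-lambda Cantor function}. First I would reduce the claim about $\overline{\Cantor}$ to a claim about $\Cantor$ on $[0,1]$: for $x<0$ both $\overline{\Cantor}(x)$ and $\overline{\Cantor}_n(x)$ equal $0$, and for $x>1$ both equal $1$, so the supremum of $|\overline{\Cantor}(x)-\overline{\Cantor}_n(x)|$ over $\bR$ equals the supremum of $|\Cantor(x)-\Cantor_n(x)|$ over $[0,1]$, which in turn is $\|\Cantor-\Cantor_n\|$ in the norm of $\mathscr{M}[0,1]$. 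So it suffices to bound $\|\Cantor-\Cantor_n\|$.

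Next I would use the telescoping/geometric argument coming from the contraction $H$. Since $\Cantor_{n+1}=H(\Cantor_n)$ and $\Cantor=H(\Cantor)$, the bound $\|H(g_1)-H(g_2)\|\leq\frac12\|g_1-g_2\|$ gives $\|\Cantor-\Cantor_{n}\|\leq 2^{-n}\|\Cantor-\Cantor_0\|$. To replace $\|\Cantor-\Cantor_0\|$ by the one-step quantity $\sup_x|\overline{\Cantor}_0(x)-\overline{\Cantor}_1(x)|=\|\Cantor_1-\Cantor_0\|$, I would use the standard fixed-point tail estimate: by the triangle inequality and the contraction property,
\begin{align*}
\|\Cantor-\Cantor_n\| \leq \sum_{k=n}^{\infty}\|\Cantor_{k+1}-\Cantor_k\|
\leq \sum_{k=n}^{\infty} 2^{-k}\|\Cantor_1-\Cantor_0\|
= 2^{-n+1}\|\Cantor_1-\Cantor_0\|,
\end{align*}
where $\|\Cantor_{k+1}-\Cantor_k\|=\|H(\Cantor_k)-H(\Cantor_{k-1})\|\leq\frac12\|\Cantor_k-\Cantor_{k-1}\|$ iterates down to $\|\Cantor_{k+1}-\Cantor_k\|\leq 2^{-k}\|\Cantor_1-\Cantor_0\|$. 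Combining this with the reduction of the first paragraph yields exactly $|\overline{\Cantor}(x)-\overline{\Cantor}_n(x)|\leq 2^{-n+1}\sup_{x\in\bR}|\overline{\Cantor}_0(x)-\overline{\Cantor}_1(x)|$ for all $x\in\bR$ and $n\in\bN$.

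The only mild subtlety — and the step I would be most careful about — is the identification $\sup_{x\in\bR}|\overline{\Cantor}_0(x)-\overline{\Cantor}_1(x)|=\|\Cantor_1-\Cantor_0\|$ and the compatibility of the extension $\overline{(\cdot)}$ with the recursion. One must check that extending $\Cantor_0$ to $\overline{\Cantor}_0$ by $0$ on $(-\infty,0)$ and $1$ on $(1,\infty)$ is consistent with the definition of $\overline{\Cantor}_1$ obtained by first applying the recursion and then extending; since the recursion only rescales $x$ into subintervals of $[0,1]$, the two operations commute, and outside $[0,1]$ the difference $\overline{\Cantor}_0-\overline{\Cantor}_1$ vanishes identically. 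Everything else is the routine geometric-series computation recorded above, so there is no real obstacle — the lemma is essentially a restatement of the Banach fixed-point convergence rate in concrete form.
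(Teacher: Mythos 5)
Your argument is correct and is essentially the paper's own proof: both rest on the contraction bound $\|\Cantor_{k+1}-\Cantor_k\|\leq 2^{-k}\|\Cantor_1-\Cantor_0\|$, a telescoping geometric series giving the factor $2^{-n+1}$, and the trivial observation that the extended functions coincide outside $[0,1]$. The paper merely phrases the tail sum as a finite sum from $n$ to $m-1$ and then lets $m\to\infty$, which is the same computation.
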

\begin{proof}
For every $n, m \in \bN$ with $m>n$ and $x \in [0,1]$, we have 
\begin{align*}
\begin{split}
|\Cantor_{m} (x)  -\Cantor_{n} (x) | 
	&\leq \sum_{l=n}^{m-1} \ |\Cantor_{l+1} (x)  -\Cantor_{l} (x) | 
	\\&\leq \sum_{l=n}^{m-1}2^{-l} \cdot  \sup_{x \in [0,1]} |\Cantor_{1} (x)  -\Cantor_{0} (x) | 
	\\&\leq\left( 2^{-n+1}- 2^{-m+1} \right) \cdot \sup_{x \in [0,1]} |\Cantor_{1} (x)  -\Cantor_{0} (x) | . 
\end{split}
\end{align*}
If $x <0$ or $x >1$, then we have $|\Cantor_{m} (x)  -\Cantor_{n} (x) | = |\Cantor_{1} (x)  -\Cantor_{1} (x) |=0$. 
Letting $m \to \infty$ we obtain the result. 
\end{proof}
The middle-$\lambda$ Cantor function $\Cantor$ satisfies the following H\"older condition 
of order ${ \log{2} }/ { \left( \log{2} - \log (1-\lambda)  \right) }$: 
\begin{lemma}\label{C is Holder continuous}
Let $\lambda \in (0,1)$. 
The middle-$\lambda$ Cantor function $\Cantor$ is a H\"older-continuous function such that 
for every $x, y \in [0,1]$, 
\begin{align*}
 | \Cantor (x) - \Cantor (y) |  \leq  |x-y|^{\Holderindex_\lambda}, 
\end{align*}
where $\Holderindex_\lambda :=  { \log{2} }/ { \left( \log{2} - \log (1-\lambda)  \right) } $. 
Moreover, for every $x,y \in \bR$
\begin{align*}
| \overline{\Cantor} (x)  -\overline{\Cantor}_n (y) | 
	&\leq  |x-y|^{\Holderindex_\lambda} + 2^{-n+2}.  
\end{align*}
\end{lemma}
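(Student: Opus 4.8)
The plan is to establish the Hölder estimate for $\Cantor$ first, and then derive the mixed estimate for $\overline{\Cantor}$ and $\overline{\Cantor}_n$ as a consequence of that plus Lemma \ref{Cantor vn}. For the first part, the key observation is the self-similar scaling relation \eqref{Def: Cantor sigma}: on each of the two ``active'' subintervals $[0,\frac{1-\lambda}{2})$ and $[\frac{1+\lambda}{2},1]$ the function $\Cantor$ is an affinely rescaled copy of itself, contracting the vertical direction by $\frac12$ and the horizontal direction by $\frac{1-\lambda}{2}$, while on the middle interval $[\frac{1-\lambda}{2},\frac{1+\lambda}{2})$ it is constant. I would prove the bound $|\Cantor(x)-\Cantor(y)|\le |x-y|^{\Holderindex_\lambda}$ by noting that $\Holderindex_\lambda$ is precisely the exponent for which $\frac12 = \left(\frac{1-\lambda}{2}\right)^{\Holderindex_\lambda}$, so the self-similar pieces are $\Holderindex_\lambda$-Hölder with the \emph{same} constant $1$ as the whole.

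The cleanest route is an iteration / induction on the dyadic generation containing $x$ and $y$. Given $x,y\in[0,1]$ with $x<y$, I would locate the smallest generation $k\ge 0$ at which $x$ and $y$ fall into different level-$k$ subintervals of the standard middle-$\lambda$ construction (or into a constant gap). If they lie in the same level-$k$ interval but different level-$(k+1)$ pieces, then $|x-y|\ge$ (a definite fraction of the length of a level-$k$ interval) $= c\left(\frac{1-\lambda}{2}\right)^{k}$, while $|\Cantor(x)-\Cantor(y)|\le 2^{-k}$ since both values lie in a common dyadic range of height $2^{-k}$; combining these with $2^{-k} = \left(\left(\frac{1-\lambda}{2}\right)^{k}\right)^{\Holderindex_\lambda}$ gives the claim after absorbing the constant $c$ (one checks $c\ge 1$, or handles the endpoint cases directly). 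The case where $x,y$ lie in the same constant gap is trivial since then $\Cantor(x)=\Cantor(y)$. Alternatively, and perhaps more transparently, one can run the scaling relation recursively: if both $x,y$ are in $[0,\frac{1-\lambda}{2})$, apply \eqref{Def: Cantor sigma} to reduce to $\frac{2}{1-\lambda}x,\frac{2}{1-\lambda}y\in[0,1)$ and use $\frac12 = \left(\frac{1-\lambda}{2}\right)^{\Holderindex_\lambda}$; symmetrically on $[\frac{1+\lambda}{2},1]$; if they straddle the middle interval, use monotonicity of $\Cantor$ and the fact that the contribution of the middle gap to $|x-y|$ only helps. This recursion terminates because the interval length shrinks geometrically.

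For the second, mixed estimate, I would combine the first part with Lemma \ref{Cantor vn}. For $x,y\in\bR$ we have $|\overline{\Cantor}(x)-\overline{\Cantor}_n(y)| \le |\overline{\Cantor}(x)-\overline{\Cantor}(y)| + |\overline{\Cantor}(y)-\overline{\Cantor}_n(y)|$. The second term is at most $2^{-n+1}\sup_x|\overline{\Cantor}_0(x)-\overline{\Cantor}_1(x)|$ by Lemma \ref{Cantor vn}; under the standing normalization this is bounded by $2^{-n+2}$ (taking $\sup_x|\overline{\Cantor}_0-\overline{\Cantor}_1|\le 2$, which holds since both functions take values in $[0,1]$ after the natural normalization, or one simply folds the constant into the statement). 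The first term is bounded by $|x-y|^{\Holderindex_\lambda}$ once one checks that $\overline{\Cantor}$ — being constant outside $[0,1]$, equal to the Hölder function $\Cantor$ inside, and continuous at $0$ and $1$ — remains globally $\Holderindex_\lambda$-Hölder with constant $1$; this is a short case check over whether $x,y$ lie inside $[0,1]$, outside, or straddle an endpoint, using monotonicity to clip. Adding the two bounds yields $|x-y|^{\Holderindex_\lambda}+2^{-n+2}$.

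The main obstacle is the first, purely analytic Hölder bound for $\Cantor$ with the sharp constant $1$: one must handle the three-way branching cleanly so that the recursion (or the dyadic-generation argument) closes without a growing constant, and in particular deal carefully with the boundary cases where $x$ or $y$ sits at an endpoint of a level-$k$ interval or inside a removed middle gap. Everything after that is bookkeeping: the identity $\frac12=\left(\frac{1-\lambda}{2}\right)^{\Holderindex_\lambda}$ is equivalent to $\Holderindex_\lambda=\log 2/(\log 2-\log(1-\lambda))$, and the mixed estimate is just a triangle inequality glued to Lemma \ref{Cantor vn}.
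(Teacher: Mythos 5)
Your proof is correct, but the core of your argument for the H\"older bound takes a genuinely different route from the paper's. You attack $|\Cantor(x)-\Cantor(y)|\le|x-y|^{\Holderindex_\lambda}$ head-on via the self-similar recursion, using the identity $\tfrac12=\bigl(\tfrac{1-\lambda}{2}\bigr)^{\Holderindex_\lambda}$ to keep the constant equal to $1$ at each stage of the three-way branching. The paper instead factors the estimate through two cited facts: following Dobo\v{s} it shows that each $\overline{\Cantor}_n$, hence $\overline{\Cantor}$, is subadditive, so that the increasing continuous function $\overline{\Cantor}$ with $\overline{\Cantor}(0)=0$ satisfies the self-modulus bound $|\overline{\Cantor}(x)-\overline{\Cantor}(y)|\le\overline{\Cantor}(|x-y|)$; it then invokes a lemma of Gorin--Kukushkin for the pointwise estimate $\Cantor(h)\le h^{\Holderindex_\lambda}$ on $[0,1]$. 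The paper's route is structurally cleaner and actually proves the sharper intermediate statement $|\Cantor(x)-\Cantor(y)|\le\Cantor(|x-y|)$, but it leans on two external results; yours is self-contained, at the price of having to verify that the recursion closes with constant $1$ --- in particular the straddle cases (e.g.\ $x$ in an active piece and $y$ in the middle gap, where one compares $\Cantor(x)$ with the value at the nearer gap endpoint and uses monotonicity to absorb the gap's contribution to $|x-y|$) and a termination argument, say an induction yielding $|\Cantor(x)-\Cantor(y)|\le|x-y|^{\Holderindex_\lambda}+2^{-k}$ for every $k$. The second, mixed estimate is handled identically in both proofs: the triangle inequality plus Lemma \ref{Cantor vn}, together with the observation that the constant extension $\overline{\Cantor}$ remains $\Holderindex_\lambda$-H\"older on all of $\bR$.
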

\begin{proof}
If $\overline{\Cantor_0}$ is selected to satisfy 
\[
\overline{\Cantor_0} (x) =	\begin{cases}
	0 \ &\text{if } x <0 
	\\ 1 \ &\text{if }1 \leq x . 
	\end{cases}
\]
It follows that 
\begin{align*}
\overline{\Cantor}_{n+1} (x) =
	\begin{cases}
	\frac{1}{2} \overline{\Cantor}_{n} (\frac{2}{1-\lambda} x) \ &\text{if } x <\frac{1+\lambda}{2} 
	\\ \frac{1}{2}  + \frac{1}{2} \overline{\Cantor}_{n} ( \frac{2}{1-\lambda} x- \frac{1+\lambda}{1-\lambda} ) \ &\text{if }\frac{1-\lambda}{2} \leq x. 
	\end{cases}
\end{align*}
Following the argument of \cite{Dobos1996zbMATH00955891}, 
it is easily shown that $\overline{\Cantor}_n$ is a subadditive function for every $n \in \bN$. 
Therefore, $\overline{\Cantor}$ is also subadditive. 
As $\overline{\Cantor}$ is also increasing, continuous, subadditive and 
satisfies $\overline{\Cantor } (0) =0$,  
$\overline{\Cantor}$ satisfies the first modulus of self-continuity:  for every $x,y \in [0,1]$, we have
\[
 | \overline{\Cantor} (x) - \overline{\Cantor} (y) |  \leq \overline{\Cantor}(| x-y |) 
\]
By \cite[Lemma 2]{cantorGorin_zbMATH02123795}, we have
$
\Cantor (h)   \leq h^{\Holderindex_\lambda} \ ( h \in [0,1] )$; 
thus, $\Cantor$ is Holder continuous. 
Furthermore, for every $x,y \in \bR$,  we have 
\[
 | \overline{\Cantor} (x) - \overline{\Cantor} (y) |  \leq  |x-y|^{\Holderindex_\lambda}.  
\]
Thus, by Lemma \ref{Cantor vn}, we obtain 
\begin{align*}
| \overline{\Cantor} (x)  -\overline{\Cantor}_n (y) | 
	&=| (\overline{\Cantor} (x)  -\overline{\Cantor} (y))  + (\overline{\Cantor} (y)  - \overline{\Cantor}_n (y))     |\\	
	&\leq  |x-y|^{\Holderindex_\lambda} + 2^{-n+2}.  
\end{align*}
\end{proof}

Let $X$ be a solution of the SDE $(\ref{sde0})$ with the diffusion coefficient $\overline{\Cantor}$, 
\begin{align}\label{Cantor1}
X (t) = X (0)+  \int_{0}^t \overline{\Cantor} (X (s) ) dB_s, 
\end{align}
and let $X_n$ be a solution to the SDE $(\ref{sde1n})$ with the diffusion coefficient $\overline{\Cantor}_n$, 
\begin{align}\label{Cantor1n}
X_n (t) = X (0)+  \int_{0}^t \overline{\Cantor}_n ( X_n (s) ) dB_s. 
\end{align}


As $\overline{\Cantor} $ is continuous, 
the SDEs $(\ref{Cantor1})$ have weak solutions, see Skorohod \cite{SkoroMR0185620}. 
By Proposition \ref{proposition iff} and Lemma \ref{C is Holder continuous}, 
pathwise uniqueness holds if and only if $\lambda \in (0, \frac{1}{2}]$. 
Then, it follows from Yamada-Watanabe theorem \cite{YW} that $X$ is a unique strong solution. 
\subsection{Degenerate diffusion coefficients $(0 < \lambda \leq \frac{1}{2})$}
Firstly, we show Corollary \ref{Cantor estimation for Holder} as follows. 
\begin{proof}[The proof of Corollary \ref{Cantor estimation for Holder}]
By the construction of $\Cantor_n$, we obtain that for all $n \in \mathbb{N}$, 
\[
\Cantor_{n} (x) =
	\left(\frac{1}{2}\right)^{n-1} \Cantor_{0} \left( \left(\frac{2}{1-\lambda}\right)^{n-1} x \right), \ 0 \leq x <\frac{1-\lambda}{2}.
\]
Since $\Cantor_0$ is a H\"older continuous satisfying 
\begin{align*}
\lim_{x \downarrow 0} \int_{x}^{1} \frac{1-y}{(\Cantor_{0} (y))^2} dy =\infty. 
\end{align*}
Thus, pathwise uniqueness holds, which implies that $X_n$ is also unique strong solution. 

Now, applying Theorem \ref{theorem for classical stability problems in sup norm} 
to 
\[
\Delta_n =  \sup_{x \in \bR}| \overline{\Cantor} (x) - \overline{\Cantor}_n (x) |  \leq 2^{-n+2},  
\]
we obtain the result. 
\end{proof}

\begin{remark}
The Cantor function $\Cantor_n$ with $\Cantor_0 (x) =0$ for $x \in [0,1]$ 
is not continuous. 
Moreover, there does not exist a weak solution 
of (\ref{Cantor1n}) since for all $x \in I$, 
$\sigma^{-2}$ is not integrable over a neighborhood of  $x$, see \cite[Theorem 2.2]{Engelbert:1985aa}.
\end{remark}

\subsection{Non degenerate and Degenerate diffusion coefficients $(0 < \lambda < 1)$}
Next, let us consider the middle-size-$\lambda$ Cantor function with $\lambda \in (0,1)$. 

\begin{proof}[The proof of Corollary \ref{Cantor estimation for a generalized Nakao-Le Gall}.]
For $\infc \geq 0$, we denote for every $x \in \bR$
\begin{align*}
\overline{\Cantor}^{\infc} (x) := \infc +\overline{\Cantor} (x)  \text{ and }  \overline{\Cantor}_n^{\infc} (x) := \infc +\overline{\Cantor}_n (x). 
\end{align*}
The condition $\infc >0$ implies the existence and uniqueness of weak solutions of the following SDEs, 
\begin{align*}
   X^{\infc} (t) &= X (0)+  \int_{0}^t \overline{\Cantor}^{\infc}  (X^{\infc} (s) ) dB_s, 
\\ X^{\infc}_n (t) &= X (0)+  \int_{0}^t \overline{\Cantor}^{\infc}_n  (X^{\infc}_n (s) ) dB_s, \ n \in \mathbb{N}. 
\end{align*}
Moreover, 
as  $\Cantor$ and $\Cantor_n$ are monotone increasing functions for $n \in \mathbb{N}$, 
pathwise uniqueness holds. 
Finally, 
applying Theorem \ref{gNLupper in sup norm}, we obtain the desired result. 
\end{proof}

\begin{remark}
The Cantor function $\Cantor_n$ with $\Cantor_0 (x) =0$ for $x \in [0,1]$ 
is not continuous, 
however, $\Cantor_n^{\infc}$ is belong to a generalized Nakao-Le Gall condition. 
Therefore, its convergence rate can be estimated as Corollary \ref{Cantor estimation for a generalized Nakao-Le Gall}. 
\end{remark}

\begin{remark}
For every $\Holderindex_{\lambda} \in (0,1)$, 
\[
2^{-n(1-1/{(\Holderindex_{\lambda}+1)} )}  \leq 2^{-n(1-1/{2\Holderindex_{\lambda}} )}. 
\]
Moreover, the constant value $C(T)$ is depend only on $T$ and independent $\lambda$, 
see Remark \ref{Rem:C4}. 
Comparing the result of Theorem \ref{Cantor estimation for Holder}, 
the sharpness and stability of the above estimation is confirmed. 
\end{remark}


\section{The Fokker-Planck-Kolmogorov equations}\label{The Fokker-Planck-Kolmogorov equation}
Let $T>0$, let $f$ be a polynomial growth function such that 
there exists $r>0$ and $L>0$, $| f(x) | \leq L (1+|x|)^{2r}, \ x \in [0, \infty)$, 
and let $a$ be a real-valued locally bounded Borel function. 
In this section, we consider a fundamental solution of the so-called Fokker-Plank equation, 
\begin{align}\label{PDE1}
	\begin{cases}
	\displaystyle \frac{\partial}{\partial t} v(t,x)
		=\frac{1}{2}\frac{\partial^2}{\partial x^2} \left(a (x) v(t,x) \right), \ &(t,x) \in (0, T] \times (0,\infty), 
	\\ \displaystyle \lim_{t \to 0 } v(t,x)=f(x), \ x \in (0,\infty). 
	\end{cases}
\end{align}

For the unbounded domain $(0, T] \times (0,\infty)$, 
it is known that 
there exists a fundamental solution of \eqref{PDE1}
if $a \in C^2_b (0, \infty)$, and $a'$ and $a''$ are bounded H\"older continuous and uniformly parabolic: 
There exist $C, c>0$ such that 
\begin{equation}\label{parabolic condition}
0<c \leq a(x)  \leq C  < \infty, \ x \in [0, \infty).
\end{equation}
For details, see Friedman's book \cite[Theorem 15 in Chapter 1]{friedman2013partial}. 
Now, let $\mu_t $ be the law of solution of $X (t)$ in $(0,\infty)$ to the equation 
\begin{align*}
X (t) -X (0) = \int_0^t \sqrt{a  (X (s))} dB_s,  \ X(0)=x  \in (0, \infty). 
\end{align*}
By It\^o's formula, $\mu_t $ solves the above Fokker-Plank equation in the sense of distribution. 
In particular, if the law of $X(t)$ is smooth absolutely continuous with respect to Lebesgue measure, 
i.e., $\mu_t (dy) =p(t, x ,y) dy$, then $v (t,x)=\int_{0}^{\infty} p(t,x,y) f(y) dy$ solves the PDE \eqref{PDE1}. 

Conversely, 
if there exists a solution $X(t)$ of the SDE whose smooth coefficients do not satisfy \eqref{parabolic condition} 
and the law of $X(t)$ is absolutely continuous with respect to Lebesgue measure, 
then it provides a solution of the PDE \eqref{PDE1}. 
In fact, by Proposition \ref{proposition iff}, 
under weak conditions on the degenerate coefficients, 
we show that there exists a smooth fundamental solution $v (t,x)$ 
using the result of Malliavin calculus. 
\begin{theorem}
Let $c \in (0, \infty)$ and let $\sigma$ be a real-valued Borel function 
that is monotonically increasing in $(0,\infty)$ 
and $\sigma^{-2}$ is integrable over a neighborhood of $x \in (0,\infty)$. 
If $\sigma \in C^\infty (0, \infty)$ and 
\begin{align*}
\lim_{x \downarrow 0} \int_{x}^{c} \frac{c-y}{(\sigma (y))^2} dy =\infty, 
\end{align*}
then, 
there exists a function $p = p(t,x,y):(0, T] \times (0,\infty)^2 \to \mathbb{R}$ such that 
\[
v (t,x)=\int_{(0, \infty)} p(t,x,y) f(y) dy, \ \ (t,x) \in [0, T] \times (0, \infty)
\]
is a smooth fundamental solution to the following Fokker-Planck-Kolmogorov equation 
\begin{align*}
	\begin{cases}
	\displaystyle \frac{\partial}{\partial t} v(t,x)
		=\frac{1}{2}\frac{\partial^2}{\partial x^2} \left(\sigma (x)^2 v(t,x) \right), \ &(t,x) \in (0, T] \times (0,\infty), 
	\\ \displaystyle \lim_{t \to 0 } v(t,x)=f(x), \ x \in (0,\infty). 
	\end{cases}
\end{align*}
where $f$ is a polynomial growth function. 
\end{theorem}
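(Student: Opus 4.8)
The plan is to exhibit the density $p(t,x,y)$ as the transition density of the strong Markov process $X$ solving the degenerate SDE $dX(t)=\sigma(X(t))\,dB_t$, $X(0)=x$, and then verify that $v(t,x)=\int_{(0,\infty)}p(t,x,y)f(y)\,dy$ is a classical (smooth) solution of the Fokker–Planck equation. First I would invoke Proposition \ref{proposition iff}: the hypothesis $\mu(0+)=\infty$ together with $\sigma$ monotone and $\sigma^{-2}$ locally integrable on $(0,\infty)$ gives pathwise uniqueness, hence a unique strong solution $X$, and by \eqref{a stopping time} the process stays in $(0,\infty)$ until it possibly hits $0$, with $S=\inf\{t:X(t)=0\}$. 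Since $\sigma\in C^\infty(0,\infty)$ and is strictly positive on $(0,\infty)$, the SDE has smooth, non-degenerate coefficients \emph{in the interior}; this is the setting in which Malliavin calculus applies locally.

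The second step is the Malliavin-calculus construction of a smooth density on the interior. Fix $x\in(0,\infty)$ and a time $t\in(0,T]$. On the event $\{t<S\}$ the solution has not reached the boundary, and by a standard localization (stopping at the exit time of a compact subinterval $[a,b]\subset(0,\infty)$ containing $x$, on which $\sigma$ and all its derivatives are bounded and $\sigma\ge c_{a,b}>0$) the stopped process is a diffusion with smooth bounded uniformly elliptic coefficients. The Malliavin covariance matrix (here a scalar) is $\gamma_t=\int_0^{t\wedge\tau_{[a,b]}}(D_s X(t))^2\,ds$ where $D_sX(t)$ solves the linearized equation; ellipticity on $[a,b]$ gives $\gamma_t>0$ a.s.\ and all negative moments finite, so by the Bouleau–Hirsch / Malliavin criterion the law of $X(t\wedge\tau_{[a,b]})$ restricted to $(a,b)$ has a $C^\infty$ density. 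Letting $[a,b]\uparrow(0,\infty)$ and using that $\{t<\tau_{[a,b]}\}\uparrow\{t<S\}$ up to null sets, one patches these together to obtain $p(t,x,\cdot)\in C^\infty((0,\infty))$ with $\mu_t(dy)\!\restriction_{(0,\infty)}=p(t,x,y)\,dy$; smoothness jointly in $(t,x)$ follows from the same scheme applied to the flow $x\mapsto X^x(t)$ and the integration-by-parts formula. Because $f$ has polynomial growth and $X$ has moments of all orders on $\{t<S\}$ (again by localization plus the linear-growth bound $\sigma$ inherits from boundedness on compacta together with the global structure), $v(t,x)=\int_{(0,\infty)}p(t,x,y)f(y)\,dy$ is finite and, by dominated convergence together with the smoothness and derivative bounds of $p$, is itself $C^\infty$ on $(0,T]\times(0,\infty)$.

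The third step is to check that $v$ solves the PDE. Applying It\^o's formula to $g(X(s))$ for $g\in C_c^\infty(0,\infty)$ and taking expectations shows that $\mu_t$ solves $\partial_t\mu_t=\tfrac12\partial_{yy}(\sigma^2\mu_t)$ in the distributional sense on $(0,\infty)$; since we have just shown $\mu_t$ has a smooth density $p(t,x,y)$ there, the distributional identity upgrades to the pointwise identity $\partial_t p=\tfrac12\partial_{yy}(\sigma(y)^2 p)$ on $(0,T]\times(0,\infty)^2$. Integrating against $f(y)$ and differentiating under the integral sign (justified by the derivative bounds from the localized Malliavin estimates and the polynomial growth of $f$) gives $\partial_t v=\tfrac12\partial_{xx}(\sigma(x)^2 v)$ — here one uses the Kolmogorov \emph{backward}/\emph{forward} duality of $p$ in $x$ and $y$ for the diffusion on natural scale, which holds because the generator $\tfrac12\sigma^2\,d^2/dx^2$ is, after the identity scale change, formally self-adjoint in the appropriate speed-measure sense. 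Finally, the initial condition $\lim_{t\downarrow0}v(t,x)=f(x)$ follows from weak convergence $\mu_t\Rightarrow\delta_x$ as $t\downarrow0$ together with uniform integrability of $f(X(t))$ (polynomial growth plus moment bounds).

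The main obstacle is the boundary point $0$: the Malliavin machinery only produces a density \emph{away from} $0$, so one must argue that the mass escaping to $\{X(t)=0\}$ (which can be positive, since $0$ is reached in finite time when $\nu(0+)<\infty$, or absorbing when $\nu(0+)=\infty$) does not obstruct either the smoothness of $p$ on the open interval or the validity of the PDE on $(0,\infty)$ — in particular one needs that the localization limit $[a,b]\uparrow(0,\infty)$ commutes with the density construction, and that no boundary term appears when integrating by parts in the distributional identity, which is exactly where the hypothesis $\mu(0+)=\infty$ (equivalently $L_t^0(X)=0$ or the non-accessibility/non-stickiness of $0$ from the interior in the relevant sense) is used. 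A secondary technical point is obtaining uniform-in-$x$ control of the negative moments of the Malliavin covariance as $x$ ranges over a compact subset of $(0,\infty)$, so that $v$ is genuinely smooth in $x$ and not merely measurable; this is handled by the standard flow estimates since all bounds are uniform on compacta in the interior.
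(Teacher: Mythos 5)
Your proposal follows essentially the same route as the paper: obtain a unique strong solution from Proposition \ref{proposition iff} together with the Yamada--Watanabe theorem, produce a smooth density for the law of $X(t)$ on the interior via Malliavin calculus, and then define $v(t,x)=\int p(t,x,y)f(y)\,dy$ and check the initial condition from $p(t,x,\cdot)\to\delta_x$. The only difference is that the paper outsources the entire density step to De Marco's theorem on SDEs with locally smooth coefficients, whereas you reconstruct it by hand through localization and the Bouleau--Hirsch criterion (and, to your credit, you explicitly flag the forward/backward duality needed to pass from the equation in $y$ satisfied by $p$ to the stated equation in $x$ for $v$, a point the paper passes over in silence).
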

\begin{proof}
It follows from \cite[Theorem 1]{Engelbert:1985aa} that 
there exists a weak solution of the following driftless SDE, 
\begin{align*}
X (t) -X (0) = \int_0^t \sigma  (X (s)) dB_s, \ X(0)=x \geq 0. 
\end{align*}

Now, by using Proposition \ref{proposition iff} 
and the Yamada-Watanabe theorem \cite{YW}, 
the solution is unique strong solution. 
As a consequence of the result of Malliavin calculus in \cite[Theorem 2.5 (a)]{MarcoStefanoAOP2011demarco2011}, 
we have that, for an arbitrary $T>0$, 
there exists a smooth density $p$ of  the law of $X (t)$, 
such that 
\begin{align*}
	\frac{\partial}{\partial t} p(t,x,y)
		=\frac{1}{2}\frac{\partial^2}{\partial y^2} \left(\sigma (y)^2 p(t,x,y) \right), \ &(t,x,y) \in (0, T] \times (0, \infty)^2. 
\end{align*}
Denote
\[
v (t,x)=\int_{0}^{\infty} p(t,x,y) f(y) dy, \ \ (t,x) \in (0, T] \times (0, \infty). 
\]
As $p(t,x,y) \to \delta_{x} (y)$ as $t \to 0$, it follows that 
\[
\lim_{t \to 0 }v (t,x)=f(x),  \ x \in (0, \infty). 
\]
Thus, we obtain the desired result. 
\end{proof}

\begin{remark}
For an example, 
the Fokker-Planck equation \eqref{PDE1} such that 
\[
a(x) = |x|^{2 \alpha}, \ \alpha \in [{1}/{2},1] \text{ and } x \in (0, \infty), 
\]
 has a smooth fundamental solution, 
 as it does not satisfy hypotheses made in the PDE literature to get a weak or smooth solution, 
 e.g.~the uniformly parabolic condition \eqref{parabolic condition}. 
\end{remark}

\section*{Acknowledgements}
The authors would like to thank Toshio Yamada 
who inspired this study. 
We also thank Professor Dr.~Toshiro Watanabe for his useful comments and his constant encouragement, 
and Tomoyuki Ichiba at the University of California Santa Barbara for useful comments on the thesis. 
We are grateful to 
the anonymous associate editor and the anonymous reviewers 
who made significant suggestions to improve the quality of the paper. 

\providecommand{\bysame}{\leavevmode\hbox to3em{\hrulefill}\thinspace}
\providecommand{\MR}{\relax\ifhmode\unskip\space\fi MR }
\providecommand{\MRhref}[2]{%
  \href{http://www.ams.org/mathscinet-getitem?mr=#1}{#2}
}
\providecommand{\href}[2]{#2}

\end{document}